\numberwithin{equation}{subsection}
\newtheorem{thm}{Theorem}[section]
      \newtheorem{lemma}[thm]{Lemma}
      \newtheorem{example}[thm]{Example}
      \newtheorem{rmk}[thm]{Remark}
      \numberwithin{equation}{section}
\title [ vector valued de Branges spaces]{ J-contractive operator valued functions, vector valued de Branges spaces and functional models}
\author[Garg]{Bharti Garg}
\address{
	Department of Mathematics\\
	Indian Institute of Technology Ropar\\
	140001\\
	India}
\email{ {bharti.20maz0012@iitrpr.ac.in},{bhartigargfdk@gmail.com}}
\author[Sarkar]{Santanu Sarkar}
\address{
	Department of Mathematics\\
	Indian Institute of Technology Ropar\\
	140001\\
	India}
\email{ {santanu@iitrpr.ac.in},{ santanu87@gmail.com}}
\begin{document}
\subjclass[2020]{46E22, 46E40, 47A56}

\keywords{$J$-contractive operator valued functions, Reproducing kernel Hilbert spaces, Vector valued de Branges spaces, Symmetric operators with infinite deficiency indices. }

\begin{abstract}
The aim of this paper is to study the vector valued de Branges spaces, which are based on $J$-contractive operator valued analytic functions, and to explore their role in the functional models for simple, closed, densely defined, symmetric operators with infinite deficiency indices.
\end{abstract}
\maketitle

\tableofcontents
\section{Introduction}
\label{Section 1}
The origin of \( J \)-theory, where \( J \) is a unitary and self-adjoint operator on a Hilbert space \(\mathfrak{X}\), can be traced back to Pontryagin's seminal article \cite{Pontryagin1}, which was influenced by Sobolev's research work \cite{Sobolev1} involving mechanical problems. This foundational work was subsequently expanded by authors such as Krein, Iokhvidov, Langer, and Bognar. Potapov's paper \cite{Potapov1} on the finite-dimensional analytic aspects of the theory eventually led Ginzburg \cite{Ginzburg1} to develop its infinite-dimensional counterpart.
The study of \( J \)-theory is crucial in addressing a variety of problems across several fields, including mathematical systems and networks, control theory, stochastic processes, operator theory, and classical analysis. Notably, it is a key component in the analysis of both direct and inverse problems for canonical systems of integral and differential equations.

A bounded linear operator \( U \) on \(\mathfrak{X}\) is said to be \( J \)-contractive if \( U^*JU \preceq J \). The theory of \( J \)-contractive and \( J \)-inner matrix valued functions has been extensively studied by Potapov \cite{Potapov1}, obtaining fundamental results regarding multiplicative representation of $J$-contractive matrix valued analytic functions. These results play a crucial role in the spectral theory of non self-adjoint operators in Hilbert spaces. A comprehensive study of vector valued reproducing kernel Hilbert spaces (RKHS), derived from a class of \( J \)-contractive and \( J \)-inner matrix valued functions, is presented in the monograph by Arov and Dym \cite{ArD08}. Recent works by Dym (see \cite{DymJFA}, \cite{ArDymone}), investigate and establish connections between two classes of vector valued RKHS, known as de Branges spaces, that exist in the literature: \(\mathcal{B}(\mathfrak{E})\) and \( \mathcal{H}(U) \) spaces. These spaces were first introduced by de Branges and applied to various analytical problems, including inverse problems for canonical differential systems. Motivated from the scalar valued Paley-Wiener spaces, de Branges explored the spaces of scalar valued entire functions which originates with \cite{brange} and a comprehensive study of these spaces can be found in his book \cite{Brange}. These spaces were further generalized in the setting of matrix and operator valued entire and meromorphic functions in his works (see \cite{Branges 1}, \cite{Branges 2}, \cite{Branges 3}). Recently, in \cite{mahapatra}, the \( \mathcal{B}(\mathfrak{E}) \) spaces based on a pair of Fredholm operator valued entire functions, referred to as de Branges operator, have been investigated. These spaces serve as a generalization of vector valued Paley-Wiener spaces and also serve as functional models for a Krein's class of entire operators with infinite deficiency indices. Connections between quasi-Lagrange type interpolation and \( \mathcal{B}(\mathfrak{E}) \) spaces have been found recently in \cite{mahapatra3}. In this paper, we aim to investigate the vector valued de Branges spaces $\mathcal{H}(U)$ that are based on $J$-contractive operator valued analytic functions. Alpay and Dym in \cite{alpay} established the connections between Krein's theory of symmetric operators on Hilbert spaces with finite and equal deficiency indices, and the de Branges spaces $\mathcal{H}(U)$ which are based on matrix valued reproducing kernels. The paper seeks to investigate how the de Branges spaces based on $J$-contractive operator valued analytic functions relate to closed, densely defined, simple, symmetric operators with infinite deficiency indices.

The paper is divided into the following sections. An overview and brief history, as well as the paper's layout and notations, are provided in Section $\ref{Section 1}$. Section $\ref{Section 2}$ contains some examples and preliminary information about RKHS and vector valued de Branges spaces based on operator valued reproducing kernels. Section $\ref{Section 3}$ outlines how the de Branges spaces serve as a functional model for simple, closed, densely defined, symmetric operators with infinite deficiency indices.
\\
The following notations will be used throughout the paper: 
\begin{itemize} 
\item $\mathbb{C},$ $\mathbb{C}_+$, and $\mathbb{C}_-$ denote the complex plane, the open upper half plane, and the open lower half plane, respectively. 
\item  $I$ denotes the identity operator on some Hilbert space.
\item $\mathfrak{X}$ denotes a complex separable infinite dimensional Hilbert space.
\item $B(\mathfrak{X})$ denotes the space of all bounded linear operators on Hilbert space $\mathfrak{X}$.
\item $S_{B(\mathfrak{X})}$ denotes the space of all contractive linear operators on $\mathfrak{X}.$
%\item $\mathfrak{S}_{\infty}$ denotes the class of all compact operators on $\mathfrak{X}$.
\item $H^{\infty}_{B(\mathfrak{X})}(\mathbb{C}_+)= \{ f: \mathbb{C}_+ \rightarrow B(\mathfrak{X}) \hspace{0.1cm}\vert\hspace{0.1cm} f~ \mbox{is bounded and holomorphic} \}$.
\item $S_{B(\mathfrak{X})}(\mathbb{C}_+) = \bigl \{ S \in H^{\infty}_{B(\mathfrak{X})}(\mathbb{C}_+) : S(z) \in S_{B(\mathfrak{X})} ~\mbox{for all}~ z \in \mathbb{C}_+ \bigl \} $. 
\item $\rho_\xi(z)= -2 \pi i (z-\bar{\xi})$.
\item For an operator $A$,
\begin{itemize}
\item [(i)]$A^*$ denotes the adjoint operator.
\item [(ii)]$A\succeq 0$ denotes that $A$ is positive semi-definite.
\item [(iii)] $\mathrm{rng} (A)$, $\ker (A)$, and $\mathcal{D}(A)$  denote the range, kernel, and domain of $A$, respectively.
\item [(iv)] A point $\alpha$ is said to be a point of regular type for $A$ if there exists a positive constant $c_\alpha$ such that $$\Vert (A-\alpha I )g\Vert \geq c_\alpha \Vert g \Vert \hspace{0.3cm} \text{for all} \hspace{0.2cm} g \in \mathcal{D}(A).$$
\end{itemize}
\item $R_z$ denotes the generalized backward shift operator of $\mathfrak{X}$ valued functions and is defined by \begin{equation}
    (R_z g)(\xi) := \left\{
   \begin{array}{ll}
         \frac{g(\xi)-g(z)}{\xi-z}  & \mbox{if }~ \xi \neq z \vspace{0.1cm} \\ 
        g'(z) & \mbox{if }~ \xi = z
   \end{array} \right.
\end{equation}
for every $z,~ \xi \in \mathbb{C}$.
\end{itemize}
 
\section{Preliminaries to RKHS and vector valued de Branges spaces}
\label{Section 2}
The theory of RKHS was introduced at the beginning of the 20th century. The groundwork for RKHS was laid by Aronszajn in 1950. In this section, we briefly recall the definition of vector valued RKHS and of positive kernel functions. In this section, we also recall some relevant examples of RKHS. For a detailed study on RKHS, see \cite{Aronszajn 1950} and \cite{Paulsen}.

A Hilbert space $\mathcal{H}$ of $\mathfrak{X}$ valued functions defined on a nonempty set $\Omega \subseteq \mathbb{C}$ is said to be a RKHS if there exists a $B(\mathfrak{X})$ valued function $K_{\omega}(\lambda)$ defined on $\Omega \times \Omega$ such that the following holds:
\begin{itemize}
\item $K_{\omega}u \in \mathcal{H}$ for all $u \in \mathfrak{X}$ and $\omega \in \Omega$.
\item $\left\langle f, K_{\omega}u \right\rangle_\mathcal{H} = \langle f(\omega),u \rangle_\mathfrak{X}$ for all $f \in \mathcal{H},~ u \in \mathfrak{X},~ \omega \in \Omega$.
\end{itemize}

The \( B(\mathfrak{X}) \)-valued function \( K_{\omega}(\lambda) \) is unique and is called the reproducing kernel (RK) of the Hilbert space \( \mathcal{H} \). Let \( \delta_{\omega} \), given by \( \delta_{\omega}(f) = f(\omega) \), denote the point evaluation operator on \( \mathcal{H} \) at the point \( \omega \). Then \( K_{\omega}(\lambda) \) can be written as \( K_{\omega}(\lambda) = \delta_{\lambda} \delta_{\omega}^* \). A function \( K : \Omega \times \Omega \rightarrow B(\mathfrak{X}) \) is said to be a positive kernel if, for any choice of \( n \in \mathbb{N} \), \( \omega_{1}, \ldots, \omega_n \in \Omega \), and \( u_1, \ldots, u_n \in \mathfrak{X} \),
\begin{equation*}
\sum_{i,j=1}^{n} \left\langle K_{\omega_j} (\omega_i) u_j , u_i \right\rangle \geqslant 0.
\end{equation*}

It can be readily verified that the RK of an RKHS is a positive kernel function. Furthermore, the vector valued version of Moore's theorem \cite[Theorem 6.12]{Paulsen} asserts that for any positive kernel \(K\), there exists a unique RKHS \(\mathcal{H}\) of vector-valued functions such that \(K\) serves as the reproducing kernel of \(\mathcal{H}\).

The following lemma from \cite[Lemma 5.6]{ArD08}, gives a useful characterization of RKHS of vector valued holomorphic functions.
\begin{lemma}\label{Lemma 2.1}
Let $\mathcal{H}$ be an RKHS of $\mathfrak{X}$ valued functions defined on some nonempty open set $\Omega \subseteq \mathbb{C}$ with RK $K_{\omega}(\lambda)$. Then $\mathcal{H}$ is an RKHS of $\mathfrak{X}$ valued holomorphic functions if $K_{\omega}(\lambda)$ is a holomorphic function of $\lambda \in \Omega$ for every $\omega \in \Omega$ and $K_{\omega}(\omega)$ is continuous on every compact subset of $\Omega$.
\end{lemma}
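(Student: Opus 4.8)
\emph{Proof proposal.} The plan is to realize an arbitrary $f \in \mathcal{H}$ as a locally uniform limit of holomorphic $\mathfrak{X}$ valued functions, and then appeal to a vector valued Weierstrass convergence theorem. First I would record the two structural consequences of the hypotheses. Since $K_{\omega}(\lambda)$ is holomorphic in $\lambda \in \Omega$ for each fixed $\omega$, every kernel section $K_{\omega}u : \lambda \mapsto K_{\omega}(\lambda)u$ is a holomorphic $\mathfrak{X}$ valued function on $\Omega$, and hence so is every finite linear combination $\sum_{j=1}^{n} K_{\omega_j} u_j$. Next I would control the point evaluation operators. Using $\delta_\lambda \delta_\lambda^* = K_\lambda(\lambda)$ from the paragraph preceding the lemma, one has $\Vert \delta_\lambda \Vert^2 = \Vert K_\lambda(\lambda) \Vert_{B(\mathfrak{X})}$, so the assumption that $\omega \mapsto K_\omega(\omega)$ is continuous, and therefore bounded, on every compact subset $C \subseteq \Omega$ furnishes a constant $M_C$ with $\Vert \delta_\lambda \Vert \leq M_C$ for all $\lambda \in C$.

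The second ingredient is density. The finite linear combinations of kernel sections span a dense subspace of $\mathcal{H}$: if $f$ is orthogonal to every $K_\omega u$, then $\langle f(\omega), u \rangle_\mathfrak{X} = \langle f, K_\omega u \rangle_\mathcal{H} = 0$ for all $\omega \in \Omega$ and $u \in \mathfrak{X}$, which forces $f = 0$. Consequently, given $f \in \mathcal{H}$ I can choose a sequence $(f_n)$ of such finite combinations (each holomorphic by the previous paragraph) with $f_n \to f$ in the norm of $\mathcal{H}$. The uniform bound on the point evaluations then upgrades this to local uniform convergence of the function values, since for $\lambda \in C$ we have $\Vert f(\lambda) - f_n(\lambda) \Vert_\mathfrak{X} = \Vert \delta_\lambda (f - f_n) \Vert_\mathfrak{X} \leq M_C \Vert f - f_n \Vert_\mathcal{H} \to 0$.

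Finally I would invoke the Weierstrass convergence theorem for $\mathfrak{X}$ valued holomorphic functions: each $f_n$ is holomorphic on $\Omega$ and $f_n \to f$ uniformly on compact subsets, whence $f$ is holomorphic. I would justify this either by passing to the limit in the Cauchy integral formula, or by reducing to the classical scalar statement through weak holomorphy, observing that $\lambda \mapsto \langle f(\lambda), u \rangle_\mathfrak{X}$ is a locally uniform limit of the scalar holomorphic functions $\lambda \mapsto \langle f_n(\lambda), u \rangle_\mathfrak{X}$ for each $u \in \mathfrak{X}$, and then applying Dunford's theorem to conclude that $f$ is holomorphic. Since $f \in \mathcal{H}$ was arbitrary, $\mathcal{H}$ is an RKHS of holomorphic functions.

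The step I expect to be the main obstacle is the passage from convergence in the $\mathcal{H}$ norm to local uniform convergence of the vector valued function values; this is precisely where the continuity, and hence local boundedness, of the diagonal kernel $K_\omega(\omega)$ is indispensable, with the identity $\Vert \delta_\lambda \Vert^2 = \Vert K_\lambda(\lambda) \Vert_{B(\mathfrak{X})}$ serving as the crux. A secondary technical point is the careful statement and use of the vector valued Weierstrass theorem, which I would handle via the weak holomorphy reduction described above.
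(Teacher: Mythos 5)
Your proposal is correct. Note that the paper itself offers no proof of this lemma: it is quoted verbatim from Arov--Dym \cite[Lemma 5.6]{ArD08}, so there is no internal argument to compare against. Your proof is the standard one (and essentially the one in the cited source): holomorphy and density of the span of the kernel sections $K_{\omega}u$, the bound $\Vert \delta_{\lambda}\Vert^2 = \Vert K_{\lambda}(\lambda)\Vert$ turning norm convergence into locally uniform convergence, and a Weierstrass-type limit theorem --- with the one genuinely infinite-dimensional wrinkle, the vector valued Weierstrass step, correctly handled by reducing to weak holomorphy against the functionals $\langle \cdot, u\rangle_{\mathfrak{X}}$ and invoking Dunford's theorem.
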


Next, we give some relevant examples of vector valued RKHS with operator valued kernel functions.
\begin{example}
It is well known that the Hardy space over the upper half plane given as 
\[ H^2_{\mathfrak{X}}(\mathbb{C}_+) = \left\{ f: \mathbb{C}_+ \rightarrow \mathfrak{X} : f \text{ is holomorphic and } \sup_{y>0} \int_{-\infty}^{\infty} \Vert f(x+iy) \Vert^2 dx < \infty \right\} \]
is an RKHS of \( \mathfrak{X} \) valued holomorphic functions with RK
\[ K_{\omega}(\lambda) = \frac{I_{\mathfrak{X}}}{\rho_{\omega}(\lambda)}. \]
\end{example}
\begin{example}
Let \( S \in S_{B(\mathfrak{X})}(\mathbb{C}_+) \) be an operator valued analytic function and corresponding to this \( S \), consider a function defined by 
\[ \Gamma_{\xi}^{S}(\lambda) = \frac{I-S(\lambda)S(\xi)^*}{\rho_{\xi}(\lambda)}. \]
It is a positive operator valued kernel function on \( \mathbb{C}_+ \times \mathbb{C}_+ \) which guarantees the existence of a unique RKHS of analytic vector valued functions denoted as \( \mathcal{H}(S) \). These spaces are known as the de Branges-Rovnyak spaces corresponding to the upper half plane.
\end{example}
Next, we recall the definition of Fredholm operators that will be used in the description of another class of vector valued de Branges space \(\mathcal{B}(\mathfrak{E})\). An operator \( A \in  B(\mathfrak{X}) \) is said to be Fredholm if it satisfies the following conditions:
\begin{itemize}
\item $\mathrm{rng}  (A) \) is closed in \( \mathfrak{X} \).
\item \( \dim(\ker(A)) < \infty \).
\item \( \dim(\ker(A^*)) < \infty \).
\end{itemize} 
The index of a Fredholm operator is defined by  
\begin{equation}
\text{index}(A) = \dim(\ker(A)) - \dim(\ker(A^*)).
\end{equation}

\begin{thm} \label{Theorem 2.5}
If \( A,B \in B(\mathfrak{X}) \). Then the following assertions are true.
\begin{itemize}
\item[(1)] \( A \) is a Fredholm operator iff \( A^* \) is a Fredholm operator and \[ \text{index}(A) = - \text{index}(A^*). \]
\item[(2)] If \( A \) is a Fredholm operator then \( A \) is invertible iff \[ \text{index}(A) = 0 \quad \text{and} \quad \ker(A) \ (\text{or} \ \ker(A^*)) = 0. \]
\end{itemize}
\end{thm}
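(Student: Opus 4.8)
The plan is to reduce everything to two standard facts valid for any $A \in B(\mathfrak{X})$: the algebraic identity $\ker(A^*) = \mathrm{rng}(A)^{\perp}$ (equivalently $\overline{\mathrm{rng}(A)} = \ker(A^*)^{\perp}$), and the closed range theorem, which asserts that $\mathrm{rng}(A)$ is closed if and only if $\mathrm{rng}(A^*)$ is closed. For part (1), I would first observe that the three defining conditions for $A$ to be Fredholm are $\mathrm{rng}(A)$ closed, $\dim\ker(A) < \infty$, and $\dim\ker(A^*) < \infty$. Applying the same definition to $A^*$ and using $A^{**} = A$, the conditions become $\mathrm{rng}(A^*)$ closed, $\dim\ker(A^*) < \infty$, and $\dim\ker(A) < \infty$. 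The two finiteness requirements coincide, so by the closed range theorem the range conditions are equivalent as well; hence $A$ is Fredholm if and only if $A^*$ is.

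For the index identity, once Fredholmness is known for both operators I would compute directly from the definition,
\[ \text{index}(A^*) = \dim\ker(A^*) - \dim\ker(A^{**}) = \dim\ker(A^*) - \dim\ker(A) = -\,\text{index}(A), \]
using $A^{**} = A$ in the middle step. This disposes of (1).

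For part (2), the forward implication is immediate: if $A$ is invertible then $\ker(A) = \ker(A^*) = 0$, whence $\text{index}(A) = 0$. For the converse, suppose $A$ is Fredholm with $\text{index}(A) = 0$ and, say, $\ker(A) = 0$. Then $\dim\ker(A^*) = \dim\ker(A) = 0$, so $\ker(A^*) = 0$ as well. Since $\mathrm{rng}(A)$ is closed, the identity $\ker(A^*) = \mathrm{rng}(A)^{\perp}$ gives $\mathrm{rng}(A) = (\ker(A^*))^{\perp} = \mathfrak{X}$, so $A$ is a continuous bijection of $\mathfrak{X}$ onto itself; the bounded inverse theorem then yields $A^{-1} \in B(\mathfrak{X})$. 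The case $\ker(A^*) = 0$ is handled symmetrically by interchanging the roles of $A$ and $A^*$.

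The only genuinely nontrivial ingredient is the closed range theorem invoked in part (1); everything else is bookkeeping with the orthogonal decomposition $\mathfrak{X} = \mathrm{rng}(A) \oplus \ker(A^*)$ available when $\mathrm{rng}(A)$ is closed. I expect this to be the main obstacle, and I would either cite it from a standard reference or sketch it as follows: if $\mathrm{rng}(A)$ is closed, then $A$ restricts to a bounded bijection from $(\ker A)^{\perp}$ onto $\mathrm{rng}(A)$, which is therefore bounded below, and a bounded-below argument transfers closedness to $\mathrm{rng}(A^*)$.
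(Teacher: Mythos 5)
Your proof is correct, but there is nothing in the paper to compare it against: Theorem 2.5 is stated in the preliminaries \emph{without proof}, as a recalled standard fact about Fredholm operators, and is then invoked later (part (1) implicitly in Step 1, and part (2) explicitly in Step 2, of the Appendix proof of Theorem 3.8, where $K_{\alpha}(\alpha)$ is shown to be Fredholm of index zero with trivial kernel and hence invertible). What you have written is the standard textbook argument, and it is sound: the identity $\ker(A^*) = \mathrm{rng}(A)^{\perp}$, the involution $A^{**}=A$, and the closed range theorem together give the equivalence of Fredholmness of $A$ and $A^*$ plus the index identity in (1); and in (2), index zero together with the vanishing of either kernel forces both kernels to vanish, after which closedness of $\mathrm{rng}(A)$ yields $\mathrm{rng}(A) = (\ker(A^*))^{\perp} = \mathfrak{X}$ and the bounded inverse theorem finishes. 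Your closing sketch of the closed range theorem (boundedness below of $A$ on $(\ker A)^{\perp}$, then transferring closedness to $\mathrm{rng}(A^*)$ via the estimate $\Vert A^* y \Vert \geq c \Vert y \Vert$ for $y \in \mathrm{rng}(A)$) can indeed be completed along the lines you indicate, though citing it is equally appropriate since the paper treats the whole theorem as citable background. One cosmetic point: the operator $B$ appearing in the statement is never used --- an artifact of the paper's phrasing that your proof rightly ignores.
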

%----------------------------------
Now, we discuss two classes of vector valued de Branges spaces. Here we consider \(\Omega \subseteq \mathbb{C}\), a nonempty open set which is symmetric about the real axis.

Let \(U(\lambda)\) be an analytic operator valued function defined on \(\Omega\) that satisfies the following identity:
\begin{equation}\label{Equation 2.2}
U(\lambda)JU(\bar{\lambda})^* = J = U(\bar{\lambda})^*JU(\lambda)~ \mbox{for all}~ \lambda \in \Omega.
\end{equation}
The above equality implies that $U(\lambda)$ is an invertible operator for all $\lambda \in \Omega$ where $$U(\lambda)^{-1}=JU(\bar{\lambda})^*J. $$
The class of vector valued de Branges spaces, denoted by \(\mathcal{H}(U)\), is the RKHS of analytic vector valued functions defined on \(\Omega\) with the kernel function given by \begin{equation} 
K_\xi^{U}(\lambda):= \left\{
    \begin{array}{ll}
         \frac{J-U(\lambda)J U(\xi)^*}{\rho_\xi(\lambda)}  & \mbox{if } \lambda,~\xi \in \Omega~\mbox{and}~\lambda \neq \overline{\xi} \vspace{0.1cm}\\
           \frac{U'(\bar{\xi}) J U(\xi)^*}{2\pi i} & \mbox{if }\xi \in \Omega~\mbox{and}~ \lambda =\overline{\xi}                                                        . \end{array} \right.\label{Equation 2.3}
\end{equation}
Note that the positivity of the above kernel implies that $U(\lambda)^*$ is $J$-contractive on $\Omega \cap \mathbb{C}_+$, $(-J)$-contractive on $\Omega \cap \mathbb{C}_-$ and $U(\lambda)$ is $J$-unitary (i.e., $U(\lambda)JU(\lambda)^*=J$ and $U(\lambda)^*JU(\lambda)=J$) on $\Omega \cap \mathbb{R}$. 

We recall another class of de Branges spaces, denoted by $\mathcal{B}(\mathfrak{E})$, which was constructed in \cite{mahapatra} when $\Omega =\mathbb{C}$. This class corresponds to a de Branges operator, which is a pair of operator valued analytic functions denoted by $\mathfrak{E}(z)=(E_-(z),~ E_+(z))$, such that the components of $\mathfrak{E}(z)$ satisfy the following conditions:
\begin{itemize}
\item[(1)] $E_+$, $E_- : \Omega \rightarrow B(\mathfrak{X})$ both are Fredholm operators for all $z \in \Omega$.
\item[(2)] $E_+$ and $E_- $ both are invertible for at least one point in $\Omega$.
\item[(3)] $E_+^{-1}E_-$ satisfy the following conditions:
\begin{eqnarray*}(E_+^{-1}E_-)^*(\lambda)(E_+^{-1}E_-)(\lambda) &\preceq & I ~\mbox{for all}~ \lambda \in \Omega\cap \mathbb{C}_+, \\
(E_+^{-1}E_-)^*(\lambda)(E_+^{-1}E_-)(\lambda) &=& I ~\mbox{for all}~ \lambda \in \Omega\cap \mathbb{R},\\
(E_+^{-1}E_-)(\lambda)(E_+^{-1}E_-)^*(\lambda)&=& I ~\mbox{for all}~ \lambda \in \Omega\cap \mathbb{R}.
\end{eqnarray*}
\end{itemize}
Now corresponding to a de Branges operator, we consider the kernel given by: \begin{equation} \label{Equation 2.4}
K_\xi^{\mathfrak{E}}(z):= \left\{
    \begin{array}{ll}
         \frac{E_+(z)E_+(\xi)^*-E_-(z)E_-(\xi)^*}{\rho_\xi(z)}  & \mbox{if } z,~\xi \in \Omega~\mbox{and}~z \neq \overline{\xi} \vspace{0.2cm}\\
         \frac{E_+^{'} (\overline{\xi})E_+(\xi)^*- E_-^{'}(\overline{\xi})E_-(\xi)^*}{-2\pi i} & \mbox{if }\xi \in \Omega~\mbox{and}~ z = \overline{\xi}.
    \end{array} \right.
\end{equation} 
The kernel is positive on  $\Omega \times \Omega$, and $\mathcal{B}(\mathfrak{E})$ denotes the corresponding unique RKHS of $\mathfrak{X}$-valued analytic functions on $\Omega$.

%-----------------
 %---------------------------------
\section{$\mathcal{H}_0(U)$ as a model space}
\label{Section 3} 
In this section, we focus on a subclass of the \(\mathcal{H}(U)\) spaces for which $I-U(\lambda)$ is not compact for all $\lambda \in \Omega$. This class will be denoted by \(\mathcal{H}_0(U)\).
Here, we aim to use \(\mathcal{H}_0(U)\) as a model space for closed, densely defined, simple, symmetric operators with infinite deficiency indices. To the best of our knowledge, corresponding to the infinite deficiency indices case, this is the first functional model for such operators. This class of operators with equal and finite deficiency indices was introduced by Krein in \cite{fundamentalkrein} and its connections with $\mathcal{H}(U)$ and $\mathcal{B}(\mathfrak{E})$ spaces corresponding to matrix valued RK's were studied in \cite{alpay}, \cite{sarkar}. This framework was further extended in \cite{mahapatra} to encompass the case of Krein's entire operators with infinite deficiency indices and establish its connections with the de Branges space \(\mathcal{B}(\mathfrak{E})\) which are based on a pair of Fredholm operator valued entire functions.

Before proceeding, we recall the following known characterization of de Branges spaces that will be required in this section. This characterization is originally due to de Branges \cite{Branges 1} and later a technical improvement of this characterization was done by Rovnyak in \cite{Rovnyak}. Here, we present Rovnyak's characterization for $\mathcal{H}(U)$ spaces corresponding to a nonempty open subset $\Omega \subset \mathbb{C}$ which is symmetric about the real axis.

\begin{thm}\label{Theorem 3.1}
Let $\mathcal{H}$ be an RKHS of vector valued analytic functions defined on $\Omega$ which is symmetric about the real axis. Then $\mathcal{H}$ is a $\mathcal{H}(U)$ space if and only if the following conditions hold:
\begin{enumerate}
\item $\mathcal{H}$ is $R_{\alpha}$ invariant for all $\alpha \in \Omega$.
\item The de Branges identity given by  \begin{equation}
\langle R_{\alpha}f,g \rangle -\langle f,R_{\beta}g \rangle -(\alpha-\bar{\beta})\langle R_{\alpha}f, R_{\beta}g \rangle= 2\pi i g(\beta)^* Jf(\alpha),
\end{equation}
holds true for all $f,~g \in \mathcal{H}$ and $\alpha,~\beta \in \Omega$.
\end{enumerate}  
\end{thm}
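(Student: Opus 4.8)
The plan is to prove the two implications separately, working throughout with the reproducing kernel $K_\xi$ of $\mathcal{H}$ and with the dense linear span $\mathcal{L} := \mathrm{span}\{K_\omega u : \omega \in \Omega,\ u \in \mathfrak{X}\}$, on which the reproducing property turns every abstract inner product into an explicit operator expression; identities established on $\mathcal{L}$ then extend by sesquilinearity and continuity of the inner product. The analyticity hypothesis together with Lemma \ref{Lemma 2.1} keeps the kernel holomorphic in $\lambda$, which I will use freely when differentiating to handle the diagonal term $\lambda = \overline{\xi}$.

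For necessity, assume $\mathcal{H} = \mathcal{H}(U)$ with kernel $K_\xi^{U}$ as in \eqref{Equation 2.3}. First I would prove $R_\alpha$-invariance by computing $R_\alpha K_\omega^{U} u$ directly: using $\rho_\xi(\lambda) = -2\pi i(\lambda - \overline{\xi})$ and the identity $U(\lambda)JU(\overline{\lambda})^* = J$ from \eqref{Equation 2.2}, the difference quotient $\bigl(K_\omega^{U}(\lambda)-K_\omega^{U}(\alpha)\bigr)/(\lambda-\alpha)$ collapses into a finite linear combination of kernel functions $K_\omega^{U}v$ and $K_\alpha^{U}w$, hence lies in $\mathcal{H}$; boundedness of $R_\alpha$ on all of $\mathcal{H}$ then follows from the closed graph theorem, since norm convergence in an RKHS forces pointwise, and therefore difference-quotient, convergence. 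Granting this, I would verify the de Branges identity on pairs $f = K_\alpha^{U}u$, $g = K_\beta^{U}v$: both sides reduce to explicit expressions in $U(\alpha),\,U(\beta),\,J$, and the reproducing property rewrites $2\pi i\, g(\beta)^* J f(\alpha)$ as exactly the quantity produced by the left-hand side. Density of $\mathcal{L}$ closes the argument.

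For sufficiency, the real work is to reconstruct $U$ from the abstract data (1)--(2). Fix a base point $\beta \in \Omega$ (taken in $\Omega \cap \mathbb{R}$ when possible, so that $\rho_\beta(\beta)=0$ forces the normalization $U(\beta)=I$) and define the candidate
\[ U(\lambda) := I - \rho_\beta(\lambda)\, K_\beta(\lambda)\, J, \]
which is analytic by Lemma \ref{Lemma 2.1}. The de Branges identity is the engine driving the reconstruction: specializing it to $\beta = \alpha$ yields the operator relation
\[ R_\alpha - R_\alpha^* = (\alpha-\overline{\alpha})\,R_\alpha^* R_\alpha + 2\pi i\,\delta_\alpha^* J \delta_\alpha, \]
where $\delta_\alpha$ is point evaluation and $\delta_\alpha^* u = K_\alpha u$. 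Combining these relations across points, together with the resolvent identity $R_\alpha - R_\beta = (\alpha-\beta) R_\alpha R_\beta$, produces a functional equation for $K_\xi(\lambda)$ whose solution is precisely $\bigl(J - U(\lambda)JU(\xi)^*\bigr)/\rho_\xi(\lambda)$ off the diagonal, with the derivative form on $\lambda=\overline{\xi}$ obtained by a limiting argument. Equivalently, the identity is what guarantees that the two-variable function $J - \rho_\xi(\lambda)K_\xi(\lambda)$ factors as $U(\lambda)JU(\xi)^*$ through a single operator-valued $U$. Once the kernel of $\mathcal{H}$ is shown to coincide with $K_\xi^{U}$, uniqueness of the RKHS attached to a positive kernel (the vector-valued Moore theorem) gives $\mathcal{H} = \mathcal{H}(U)$ with equality of norms.

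The main obstacle lies in this sufficiency direction: verifying that the reconstructed $U$ genuinely satisfies the $J$-unitarity relation \eqref{Equation 2.2} and that the full kernel factorization holds, including the diagonal term. In the present infinite-dimensional setting, where $I - U$ is not assumed compact, I would take particular care that $R_\alpha$ is bounded and that every operator manipulation is justified on all of $\mathcal{H}$ rather than only on the dense span $\mathcal{L}$, avoiding any finite-dimensional shortcut. Finally, since $U$ is determined only up to a right $J$-unitary constant factor $V$ (for $VJV^*=J$ leaves the kernel unchanged), I would either exploit the normalization $U(\beta)=I$ at a real base point or simply absorb such a factor, noting it does not affect the space $\mathcal{H}(U)$.
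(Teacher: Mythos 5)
The paper does not actually prove Theorem \ref{Theorem 3.1}: it recalls it as a known characterization, citing de Branges \cite{Branges 1} and Rovnyak \cite{Rovnyak}, so there is no internal proof to compare yours against; judged on its own, your outline follows the right classical strategy but has two genuine gaps. The first is in the necessity direction. Your kernel computation is essentially correct (the precise identity is $R_\alpha K^U_\omega u=(\alpha-\bar\omega)^{-1}\bigl(K^U_{\bar\alpha}JU(\alpha)JU(\omega)^*u-K^U_\omega u\bigr)$, so the second kernel that appears is $K^U_{\bar\alpha}$, not $K^U_\alpha$, and the case $\alpha=\bar\omega$ needs separate treatment), but it only shows that $R_\alpha$ maps the dense span $\mathcal{L}=\mathrm{span}\{K^U_\omega u\}$ into $\mathcal{H}$. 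The closed graph theorem applies to an operator that is already defined on all of $\mathcal{H}$ with values in $\mathcal{H}$; here the statement ``$R_\alpha f\in\mathcal{H}$ for every $f\in\mathcal{H}$'' is exactly what is to be proved, so invoking it is circular --- a closed, densely defined operator need not be everywhere defined or bounded. What is missing is an a priori bound $\Vert R_\alpha k\Vert\leq C_\alpha\Vert k\Vert$ for $k\in\mathcal{L}$; only then does the continuous extension exist and get identified with the difference quotient through pointwise convergence. Such a bound can in fact be extracted from the de Branges identity on $\mathcal{L}$ (take $g=f$, $\beta=\alpha$ and solve the resulting quadratic inequality in $\Vert R_\alpha f\Vert$), but this works only for $\alpha\notin\mathbb{R}$ and forces you to reverse your logical order: identity on kernels first, then boundedness, then invariance, with a separate argument for real $\alpha$.

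The second gap is in the sufficiency direction, and it fails in exactly the case this paper needs. Your candidate $U(\lambda)=I-\rho_\beta(\lambda)K_\beta(\lambda)J$ with normalization $U(\beta)=I$ requires a real base point, but throughout Section \ref{Section 3} the relevant domain is $\Omega=\mathbb{C}_+\cup\mathbb{C}_-$ (Remark \ref{Remark 3.2}, \eqref{Equation 3.7}), so $\Omega\cap\mathbb{R}=\emptyset$. For a non-real base point $\beta$ one computes $I-\rho_\beta(\lambda)K_\beta(\lambda)J=U(\lambda)JU(\beta)^*J$, and the constant factor $V=JU(\beta)^*J$ is in general \emph{not} $J$-unitary: identity \eqref{Equation 2.2} gives $U(\bar\beta)^*JU(\beta)=J$, not $U(\beta)^*JU(\beta)=J$ (scalar check: $J=1$, $U(\lambda)=e^{i\lambda}$, $V=\overline{U(\beta)}$ with $\vert V\vert\neq 1$ off the real axis). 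Since right multiplication by a non-$J$-unitary constant both changes the kernel and destroys \eqref{Equation 2.2}, this factor cannot simply be ``absorbed'' as you propose; the classical constructions handle a non-real base point with a genuinely different formula built from both $K_\beta$ and $K_{\bar\beta}$. Compounding this, the crux of the theorem --- that conditions (1) and (2) force the two-variable factorization $J-\rho_\xi(\lambda)K_\xi(\lambda)=U(\lambda)JU(\xi)^*$ through a single analytic $U$ --- is asserted (``produces a functional equation \dots whose solution is precisely'') rather than derived. That derivation is the actual content of de Branges' and Rovnyak's theorem and cannot be left as a black box; as written, the proposal assumes the conclusion at its central step.
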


Let \(E\) be a simple, closed, densely defined, symmetric operator on the Hilbert space \(\mathfrak{X}\) with infinite deficiency indices, and let $Y$ be a closed infinite dimensional subspace of $\mathfrak{X}$. Denote \(\mathfrak{M}_\xi\) as the range of the operator \(E-\xi I\) for any \(\xi \in \mathbb{C}\). Following \cite[Appendix 1]{kreinlect}, a point $\xi \in \mathbb{C}$ is called a $Y$-regular point for the operator $E$ if $\mathfrak{M}_{\xi}$ is a closed subspace of $\mathfrak{X}$ and the following direct sum decomposition holds true: \begin{equation} \label{Equation 3.2}
\mathfrak{X}= \mathfrak{M}_\xi \dotplus Y .
\end{equation} 
It is known that every $Y$-regular point is a point of regular type of the operator $E$ and the collection of all $Y$-regular points forms a open subset of $\mathbb{C}$. We will consider those non empty open subsets $\Omega \subset \mathbb{C}$ which are symmetric about the real axis and consist of $Y$-regular points of $E$. If $\Omega= \mathbb{C}$, then $E$ is called an entire operator or $Y$-entire operator.
\begin{rmk}\label{Remark 3.2}
Given a simple, closed, densely defined, symmetric operator $E$ with infinite deficiency indices, and assuming the existence of a point of regular type (denoted by $a$) on the real line; we can always establish the existence of the following: 
\begin{itemize}
\item A fixed infinite dimensional closed subspace $Y$ of $\mathfrak{X}$; and 
\item A non empty open subset $\Omega \subset \mathbb{C}$, which is symmetric about the real axis and consists of $Y$-regular points of $E$.
\end{itemize}
It is known that \(\mathfrak{M}_\xi\) is a closed subspace of \(\mathfrak{X}\) for every \(\xi \in \mathbb{C}_+ \cup \mathbb{C}_-\). 
 Consider the Shtrauss extension $\tilde{E_a}$ of $E$ given by \begin{equation}\linebreak
\mathcal{D}(\tilde{E_a})= \mathcal{D}(E) \dotplus \mathfrak{M}_a^{\perp}, \hspace{0.1cm} \tilde{E_a}(f_E+ \phi_a)=Ef_E+a\phi_a,
\end{equation}where, $ f_E \in \mathcal{D}(E), \phi_a \in \mathfrak{M}_a^{\perp}$.
This extension $\tilde{E_a}$ yields a self-adjoint operator. For more details, we refer to \cite{straus}, \cite{arlinski}. Thus, for any $z \in  \mathbb{C}_+ \cup \mathbb{C_-}$ and $f \in \mathfrak{X},$ there exists a unique $f_E \in \mathcal{D}(E)$ and $\phi_a \in \mathfrak{M}_a^{\perp}$ such that \begin{equation}
f=(\tilde{E_a}-zI)(f_E+\phi_a)= (E-zI)f_E+(a-z)\phi_a.
\end{equation}
We obtain the following direct sum decomposition: \begin{equation} \label{Equation 3.5}
\mathfrak{X}= \mathfrak{M}_z \dotplus \mathfrak{M}_a^{\perp}\hspace{0.5cm} \text{for all} \hspace{0.1cm} z \in \mathbb{C}_+ \cup \mathbb{C_-},
\end{equation}
with the intersection property: \begin{equation}
\mathfrak{M}_z \cap \mathfrak{M}_a^{\perp}= \{ 0 \} \hspace{0.5cm} \text{for all} \hspace{0.1cm} z \in \mathbb{C}_+ \cup \mathbb{C_-}.
\end{equation}
Clearly, $\mathfrak{M}_a^{\perp}$ is a closed subspace. Since, $a$ is a point of regular type of operator $E$, there exists an interval $(a- \epsilon, a+ \epsilon)$ consisting of points of regular type of $E$. Since $E$ is symmetric, as a result, $\mathbb{C}_+ \cup \mathbb{C_-} \cup (a- \epsilon, a+ \epsilon)$ becomes a connected subset of the field of regularity of $E$. Moreover, $E$ is an operator with infinite deficiency indices and it follows from  \cite[Section 78]{Akhiezer}, $\mathfrak{M}_a^{\perp}$ is infinite dimensional. Therefore, the required infinite dimensional closed subspace $Y$ can be choosen to be $\mathfrak{M}_a^{\perp}$. Furthermore, from \eqref{Equation 3.5}, \begin{equation} \label{Equation 3.7}
\Omega= \mathbb{C}_+ \cup \mathbb{C_-} 
\end{equation}
consists of $Y$-regular points of $E$.
\end{rmk}

Now, we aim to construct three different RKHSs which will be denoted by $\mathcal{H}_1$, $\mathcal{H}_2$ and $\mathcal{H}_3$ from a simple, closed, densely defined, symmetric operator $E$ with infinite deficiency indices on the Hilbert space $\mathfrak{X}$. We will observe that all the three spaces are $\mathcal{H}_0(U)$ spaces and they serve as functional models for the operator $E$.

By $\eqref{Equation 3.2}$, for every $f \in \mathfrak{X}$, there exists a unique $g$ (depending on $\xi$) and a projection operator $P_{Y}(\xi)$ such that\begin{equation}
f=(E- \xi I)g + P_{Y}(\xi) f  \hspace{0.3cm} \text{for all} \hspace{0.1cm} \xi \in \Omega.
\end{equation}
The operator $P_{Y}(\xi)$ is a bounded linear operator for all $\xi \in \Omega$. Fixing $f$, we consider a map from $\Omega $ to $\mathfrak{X}\oplus \mathfrak{X}$ defined by \begin{equation}
 \xi \longrightarrow \begin{bmatrix} 
	P_{Y}(\xi)f  \\
	\frac{1}{\pi i} \tau_{Y}(\xi)f \\
	\end{bmatrix},
\end{equation}
where $\tau_{Y}(\xi)\in B(\mathfrak{X})$ is such that $\tau_{Y}(\xi)f=(E-\xi)^{-1}(I-P_{Y} (\xi))f$. We denote these $\mathfrak{X} \oplus \mathfrak{X}$ valued functions by $$\begin{bmatrix} 
	f_Y  \\
	\tilde{f}_Y \\
	\end{bmatrix}
	 $$
which are defined as \begin{equation}
\begin{bmatrix} 
	f_Y  \\
	\tilde{f}_Y \\
	\end{bmatrix}
	(\xi)=\begin{bmatrix} 
	P_{Y}(\xi)f  \\
	\frac{1}{\pi i} \tau_{Y}(\xi)f \\
	\end{bmatrix}.
\end{equation}
Define \(\mathcal{H}_1\) as follows:
\[
\mathcal{H}_1 = \left\lbrace \begin{bmatrix} 
	f_Y  \\
	\tilde{f}_Y \\
	\end{bmatrix} : f \in \mathfrak{X} \right\rbrace.
\]
The space \(\mathcal{H}_1\) consists of vector valued analytic functions defined on \(\Omega\). Since \(E\) is simple, the map \(\Psi: \mathfrak{X} \rightarrow \mathcal{H}_1\) given by
\begin{equation} \label{Equation 3.11}
f \mapsto \frac{1}{\sqrt{2}} \begin{bmatrix} 
	f_Y  \\
	\tilde{f}_Y \\
	\end{bmatrix}
\end{equation}
is injective. Therefore, \(\mathcal{H}_1\) is the vector space of analytic vector valued functions with respect to the standard scalar multiplication and pointwise addition.	
	 Define the inner product in $\mathcal{H}_1$ as follows:
\begin{equation}
\left\langle \begin{bmatrix} 
	f_Y  \\
	\tilde{f}_Y \\
\end{bmatrix}, \begin{bmatrix} 
	g_Y  \\
	\tilde{g}_Y \\
\end{bmatrix} \right\rangle_{\mathcal{H}_1} = 2 \langle f, g \rangle_\mathfrak{X}.
\end{equation}
With respect to this inner product, $\Psi$ defines a unitary operator from $\mathfrak{X}$ to $\mathcal{H}_1$ given by $(\ref{Equation 3.11})$. Hence, $\mathcal{H}_1$ is a Hilbert space. Next, we demonstrate that $\mathcal{H}_1$ is an RKHS by showing that the pointwise evaluations are bounded. Since $P_Y(\xi)$ is a bounded operator on $\mathfrak{X}$, we have
\begin{equation} \label{Equation 3.13}
\Vert P_Y(\xi)f \Vert \leq \Vert P_Y(\xi) \Vert \Vert f \Vert_\mathfrak{X}.
\end{equation}
Furthermore, \begin{eqnarray}
\Vert \frac{1}{\pi i} \tau_Y(\xi)f\Vert &=& \frac{1}{\pi} \Vert (E-\xi)^{-1}(f-P_Y(\xi)f) \Vert\\  
                  & \leq & \frac{1}{\pi} \Vert (E-\xi)^{-1}\Vert \Vert \left(I-  P_Y(\xi)\right)f \Vert  \\ \label{Equation 3.16}
&\leq & \frac{1}{\pi} \Vert (E-\xi)^{-1}\Vert \left( 1+ \Vert P_Y(\xi) \Vert \right) \Vert f \Vert_\mathfrak{X}.
\end{eqnarray}
Utilizing $(\ref{Equation 3.13})$ and $(\ref{Equation 3.16})$, we have \begin{eqnarray}
\biggl\Vert \begin{bmatrix} 
	f_Y  \\
	\tilde{f}_Y\\
	\end{bmatrix} (\xi) \biggl\Vert &=& \biggl\Vert \begin{bmatrix} 
	P_{Y}(\xi)f  \\
	\frac{1}{\pi i} \tau_{Y}(\xi)f \\
	\end{bmatrix} \biggl\Vert \\
	&\leq & c_1 \Vert f \Vert_{\mathfrak{X}} \\ 
	&=& c_2 \biggl\Vert \begin{bmatrix} 
	f_Y  \\
	\tilde{f}_Y \\
	\end{bmatrix}  \biggl\Vert_{\mathcal{H}_1},
\end{eqnarray}
where $c_1$ and $c_2$ are some constants. We define $J_1$ on $\mathfrak{X} \oplus \mathfrak{X}$ as $$
J_1(f,g)=(g,f) \hspace{0.2cm} \text{for all}~ f,g \in \mathfrak{X}.
$$ In matrix operator form, $J_1$ is represented by the following signature operator: $$J_1 =\begin{bmatrix}
0 & I \\ 
I & 0
\end{bmatrix}  . $$
In the forthcoming theorem, which is motivated by \cite[Theorem 8.3]{alpay}, we will establish the relationship between the RKHS as constructed from a simple, closed, densely defined, symmetric operator with infinite deficiency indices and the de Branges space \(\mathcal{H}(U)\).

 %----------------------------------------
\begin{thm} \label{Theorem 3.3}
Consider a simple, closed, densely defined, symmetric operator $E$ on the Hilbert space $\mathfrak{X}$ with infinite deficiency indices. Let $\mathcal{H}_1$ denote the RKHS constructed as described above. Then $\mathcal{H}_1$ is a $\mathcal{H}(U)$ space, where $U(\lambda)$ is an analytic operator valued function satisfying $\eqref{Equation 2.2}$ on $\Omega$, with the signature operator $J = J_1$.
\end{thm}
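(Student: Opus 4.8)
The plan is to invoke Rovnyak's characterization, Theorem \ref{Theorem 3.1}. Since it has already been shown that $\mathcal{H}_1$ is an RKHS of $\mathfrak{X}\oplus\mathfrak{X}$-valued analytic functions on the set $\Omega$, which is symmetric about the real axis, it suffices to verify the two hypotheses of that theorem with $J = J_1$: namely (i) that $\mathcal{H}_1$ is $R_\alpha$-invariant for every $\alpha \in \Omega$, and (ii) that the de Branges identity holds. Throughout, I write $F_a \in \mathcal{H}_1$ for the model function generated by $a \in \mathfrak{X}$, i.e. $F_a(\xi) = \big[\, P_Y(\xi)a \ ; \ \tfrac{1}{\pi i}\tau_Y(\xi)a \,\big]$, and I exploit the defining relation $(E-\xi)\tau_Y(\xi)a = (I-P_Y(\xi))a$ coming from the decomposition \eqref{Equation 3.2}.

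For $R_\alpha$-invariance, I would fix $F = F_f$ and claim that $R_\alpha F = F_{f'}$ with $f' := \tau_Y(\alpha)f \in \mathcal{D}(E)$. Writing $g(\xi) = \tau_Y(\xi)f$, so that $(E-\xi)g(\xi) = f - f_Y(\xi)$, the core step is the resolvent-type identity $(E-\xi)\big(g(\xi)-g(\alpha)\big) = f_Y(\alpha)-f_Y(\xi) + (\xi-\alpha)g(\alpha)$, obtained by subtracting the defining relations at $\xi$ and at $\alpha$. Decomposing $g(\alpha)$ along $\mathfrak{X} = \mathfrak{M}_\xi \dotplus Y$ then isolates $P_Y(\xi)\tau_Y(\alpha)f = \tfrac{f_Y(\xi)-f_Y(\alpha)}{\xi-\alpha} = (R_\alpha f_Y)(\xi)$, matching the first component. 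For the second, I would apply $(E-\xi)$ to $(R_\alpha \tilde f_Y)(\xi) = \tfrac{1}{\pi i}\tfrac{g(\xi)-g(\alpha)}{\xi-\alpha}$ and compare with $(E-\xi)\tfrac{1}{\pi i}\tau_Y(\xi)\tau_Y(\alpha)f = \tfrac{1}{\pi i}(I-P_Y(\xi))\tau_Y(\alpha)f$; both reduce to $\tfrac{1}{\pi i}\big(g(\alpha) + \tfrac{f_Y(\alpha)-f_Y(\xi)}{\xi-\alpha}\big)$, giving the match. Hence $R_\alpha F = F_{\tau_Y(\alpha)f} \in \mathcal{H}_1$, and, transported through the unitary $\Psi$ of \eqref{Equation 3.11}, the operator $R_\alpha$ corresponds to $\tau_Y(\alpha)$ on $\mathfrak{X}$.

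With this identification the de Branges identity collapses to a two-line computation. Using $\langle F_a, F_b\rangle_{\mathcal{H}_1} = 2\langle a,b\rangle_\mathfrak{X}$, the left-hand side equals $2\big[\langle \tau_Y(\alpha)f, g\rangle - \langle f, \tau_Y(\beta)g\rangle - (\alpha-\bar\beta)\langle \tau_Y(\alpha)f, \tau_Y(\beta)g\rangle\big]$, while unwinding $J_1$ in $2\pi i\, g(\beta)^* J_1 f(\alpha)$ (and using the factor $\tfrac{1}{\pi i}$ in the second components) yields $2\big[\langle \tau_Y(\alpha)f, P_Y(\beta)g\rangle - \langle P_Y(\alpha)f, \tau_Y(\beta)g\rangle\big]$ for the right-hand side. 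Substituting $g = (E-\beta)\tau_Y(\beta)g + P_Y(\beta)g$ and $f = (E-\alpha)\tau_Y(\alpha)f + P_Y(\alpha)f$, and setting $u = \tau_Y(\alpha)f$, $v = \tau_Y(\beta)g \in \mathcal{D}(E)$, the difference of the two sides simplifies to $\langle u, Ev\rangle - \langle Eu, v\rangle$, which vanishes precisely because $E$ is symmetric.

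I expect the main obstacle to be the $R_\alpha$-invariance step, since it demands careful bookkeeping of the domain of $E$ and of the inverse of $E-\xi$ restricted to its range $\mathfrak{M}_\xi$, interlaced with the skew projection $P_Y(\xi)$; once $R_\alpha$ is identified with $\tau_Y(\alpha)$, condition (ii) reduces to symmetry alone. Finally, Theorem \ref{Theorem 3.1} then produces, from the verified conditions, an analytic operator valued function $U$ satisfying \eqref{Equation 2.2} on $\Omega$ with $J = J_1$ for which $\mathcal{H}_1 = \mathcal{H}(U)$, so $U$ itself need not be exhibited explicitly.
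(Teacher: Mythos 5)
Your proposal is correct and follows essentially the same route as the paper: both verify Rovnyak's characterization (Theorem \ref{Theorem 3.1}) by showing that $R_\alpha$ acts as $\tau_Y(\alpha)$ through the unitary $\Psi$ (your $f'=\tau_Y(\alpha)f$ is exactly the paper's $h$ defined by $f=(E-\alpha I)h+P_Y(\alpha)f$), and then reduce the de Branges identity, via the decompositions at $\alpha$ and $\beta$ and the inner product formula $\langle F_a,F_b\rangle_{\mathcal{H}_1}=2\langle a,b\rangle_{\mathfrak{X}}$, to the symmetry of $E$. The only cosmetic difference is organizational: the paper massages the left-hand side into the form $2\langle h,P_Y(\beta)g\rangle-2\langle P_Y(\alpha)f,l\rangle$ and matches it against the right-hand side, whereas you show the difference of the two sides equals $2\bigl[\langle u,Ev\rangle-\langle Eu,v\rangle\bigr]=0$.
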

\begin{proof}
In order to demonstrate that $\mathcal{H}_1$ is a $\mathcal{H}(U)$ space, we employ the characterization as provided by Theorem $\ref{Theorem 3.1}$. First, we establish that $\mathcal{H}_1$ is $R_\alpha$ invariant for every $\alpha \in \Omega$. Consider the action of $R_\alpha$ on a function in $\mathcal{H}_1$:
\begin{eqnarray}
\left(R_{\alpha} \begin{bmatrix} 
    f_Y  \\
    \tilde{f}_Y \\
    \end{bmatrix}\right) (\xi)\nonumber
&=&  \begin{bmatrix} 
    (R_{\alpha}f_Y)(\xi)  \\
    (R_{\alpha}\tilde{f}_Y)(\xi) \\ 
    \end{bmatrix}\\ \nonumber
\vspace{0.2cm}       
&=&  \begin{bmatrix} 
    \frac{f_Y(\xi)-f_Y(\alpha)}{\xi-\alpha}  \\
    \frac{\tilde{f}_Y(\xi)- \tilde{f}_Y(\alpha)}  {\xi-\alpha}\\
    \end{bmatrix}\\    
\vspace{0.2cm}       
&=&  \begin{bmatrix} \label{Equation 3.20}
    \frac{P_Y(\xi)f-P_Y(\alpha)f}{\xi-\alpha}  \\
    \frac{\tau_Y(\xi)f- \tau_Y(\alpha)f}{\pi      i(\xi-\alpha)}\\
    \end{bmatrix}.
\end{eqnarray}
Now, let $g$ and $h$ be such that 
$$
f
=(E-\xi I)g+P_Y(\xi)f
=(E-\alpha I)h+P_Y(\alpha)f.
$$
This implies $$ h=\frac{P_Y(\xi)f - P_Y(\alpha)f}{\xi-\alpha}+(E-\xi I)\left( \frac{g-h}{\xi-\alpha}\right).$$
From $\eqref{Equation 3.20}$, we obtain $$  \left(R_{\alpha} \begin{bmatrix} 
    f_Y  \\
    \tilde{f}_Y \\
    \end{bmatrix}\right) (\xi)= \begin{bmatrix} 
    P_Y(\xi)h  \\
    \frac{\tau_Y(\xi)h}{\pi i} \\
    \end{bmatrix}    =  \begin{bmatrix} 
    h_Y  \\
    \tilde{h}_Y \\
    \end{bmatrix} (\xi).$$
Next, we aim to demonstrate the validity of the de Branges identity for all $\alpha, \beta \in \Omega$ and $\begin{bmatrix} 
f_Y  \\
\tilde{f}_Y \\
\end{bmatrix},  \begin{bmatrix} 
g_Y  \\
\tilde{g}_Y \\
\end{bmatrix} \in \mathcal{H}_1$: \begin{multline} \label{Equation 3.21}
\left\langle R_\alpha  \begin{bmatrix} 
	f_Y  \\
	\tilde{f}_Y\\
	\end{bmatrix},  \begin{bmatrix} 
	g_Y  \\
	\tilde{g}_Y \\
	\end{bmatrix} \right\rangle  -\left\langle  \begin{bmatrix} 
	f_Y  \\
	\tilde{f}_Y \\
	\end{bmatrix}, R_{\beta} \begin{bmatrix} 
	g_Y  \\
	\tilde{g}_Y \\
	\end{bmatrix} \right\rangle - (\alpha- \overline{\beta}) \left\langle R_{\alpha}\begin{bmatrix} 
	f_Y  \\
	\tilde{f}_Y \\
	\end{bmatrix},  R_{\beta}\begin{bmatrix} 
	g_Y  \\
	\tilde{g}_Y \\
	\end{bmatrix} \right\rangle 
	\\
	= 2\pi i \begin{bmatrix} 
	g_Y  \\
	\tilde{g}_Y \\
	\end{bmatrix}^*(\beta) J \begin{bmatrix} 
	f_Y  \\
	\tilde{f}_Y\\
	\end{bmatrix}(\alpha).
	\end{multline}
The left-hand side (L.H.S.) of $\eqref{Equation 3.21}$ is expressed as
\begin{multline} \label{Equation 3.22}
\left\langle   \begin{bmatrix} 
	h_Y  \\
	\tilde{h}_Y \\
	\end{bmatrix},  \begin{bmatrix} 
	g_Y  \\
	\tilde{g}_Y \\
	\end{bmatrix} \right\rangle  -\left\langle  \begin{bmatrix} 
	f_Y  \\
	\tilde{f}_Y \\
	\end{bmatrix}, \begin{bmatrix} 
	l_Y  \\
	\tilde{l}_Y \\
	\end{bmatrix} \right\rangle - (\alpha- \overline{\beta}) \left\langle \begin{bmatrix} 
	h_Y  \\
	\tilde{h}_Y \\
	\end{bmatrix},  \begin{bmatrix} 
	l_Y  \\
	\tilde{l}_Y \\
	\end{bmatrix} \right\rangle 
\\
= 2\langle h,g \rangle	-2\langle f,l \rangle-2 (\alpha- \overline{\beta})\langle h, l \rangle	
\end{multline}	
where $h$ and $l$ are defined as 
\begin{equation} \label{Equation 3.23}
f=(E-\alpha I)h +P_Y(\alpha)f \hspace{0.3cm}\text{and} \hspace{0.3cm}
g=(E-\beta I)l+P_Y(\beta)g.
\end{equation}
Observe the following two equalities: $$ 2\langle h,g\rangle =2 \langle h,g-P_Y(\beta)g \rangle+ 2\langle h, P_Y(\beta)g \rangle$$ and \begin{eqnarray*}
\langle h,g-P_Y(\beta)g \rangle &=& \langle h,(E- \beta I)l \rangle \\
&=& \langle Eh,l \rangle- \langle \overline{\beta} h,l\rangle\\
&=& \langle (E-\alpha I)h,l \rangle- \langle (\overline{\beta}-\alpha) h,l\rangle 
\end{eqnarray*} Substituting these into $\eqref{Equation 3.22}$, we obtain $$\text{L.H.S. of $(\ref{Equation 3.21})$}=2\langle h,P_Y(\beta)g \rangle - 2 \langle P_Y(\alpha)f,l \rangle.$$ 
Evaluating R.H.S. of $(\ref{Equation 3.21})$ with $J= \begin{bmatrix}
0 & I \\ 
I & 0
\end{bmatrix}$, we get 
\begin{eqnarray*}
2\pi i  \begin{bmatrix} 
	g_Y  \\
	\tilde{g}_Y\\
	\end{bmatrix}^*(\beta) J  \begin{bmatrix} 
	f_Y  \\
	\tilde{f}_Y \\
	\end{bmatrix}(\alpha) 
&=& 2\pi i  \left\langle J \begin{bmatrix} 
	f_Y  \\
	\tilde{f}_Y \\
	\end{bmatrix}(\alpha) ,  \begin{bmatrix} 
	g_Y  \\
	\tilde{g}_Y \\
	\end{bmatrix}(\beta) \right\rangle \\
&=&	 2 \pi i  \left\langle  \begin{bmatrix}
0 & I \\ 
I & 0
\end{bmatrix} 
\begin{bmatrix} 
	P_Y(\alpha)f  \\
	\frac{\tau_Y(\alpha)f}{\pi i} \\
	\end{bmatrix} ,  \begin{bmatrix} 
	P_Y(\beta)g  \\
	\frac{\tau_Y(\beta)g}{\pi i} \\
	\end{bmatrix} \right\rangle \\
&=& 2\langle \tau_Y(\alpha)f,P_Y(\beta)g \rangle - 2 \langle P_Y(\alpha)f,\tau_Y(\beta)g \rangle\\
&=& 2\langle h,P_Y(\beta)g \rangle - 2 \langle P_Y(\alpha)f,l \rangle.
\end{eqnarray*}
Hence, the result is proved.
\end{proof}
%-----------------------------------------------
Now, we will observe that the space \( \mathcal{H}_1 \) is a \( \mathcal{H}_0 (U) \) space. Before proceeding, we present a lemma that will be used in the proof.

\begin{lemma}\label{Lemma 3.4}
Let \( I - U(\alpha) \) be compact for \( \alpha = \lambda,~ \omega \), where \( \lambda \neq \bar{\omega} \). Then, the kernel function \( K_{\omega}(\lambda) \), given by $\eqref{Equation 2.3}$, is compact.
\end{lemma}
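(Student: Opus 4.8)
The plan is to exploit the fact that the compact operators form a closed, two-sided, and $*$-closed ideal $\mathcal{K}(\mathfrak{X})$ in $B(\mathfrak{X})$. Since $\lambda \neq \bar{\omega}$, the relevant formula for $K_\omega(\lambda)$ from $\eqref{Equation 2.3}$ is the off-diagonal one, namely $K_\omega(\lambda) = \frac{J - U(\lambda) J U(\omega)^*}{\rho_\omega(\lambda)}$, and its denominator $\rho_\omega(\lambda) = -2\pi i(\lambda - \bar{\omega})$ is a nonzero scalar precisely because $\lambda \neq \bar{\omega}$. As scalar multiples of compact operators remain compact, it therefore suffices to show that the numerator $J - U(\lambda) J U(\omega)^*$ belongs to $\mathcal{K}(\mathfrak{X})$.

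First I would set $A = I - U(\lambda)$ and $B = I - U(\omega)$, both of which are compact by hypothesis. Because the adjoint of a compact operator is compact, $B^* = (I - U(\omega))^* = I - U(\omega)^*$ is compact as well; this is the one place where $*$-closedness of $\mathcal{K}(\mathfrak{X})$ must be invoked, in order to handle the factor $U(\omega)^*$. Substituting $U(\lambda) = I - A$ and $U(\omega)^* = I - B^*$ into the numerator and expanding the product $(I-A)J(I-B^*)$, the key observation is that the leading term $J$ in this expansion cancels against the $J$ standing in front, so that $J - U(\lambda)JU(\omega)^* = AJ + JB^* - AJB^*$. The substance of the argument lies in this cancellation: it ensures that what survives is not a merely bounded operator, but a sum in which every summand carries at least one compact factor.

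By the ideal property of $\mathcal{K}(\mathfrak{X})$, each of $AJ$, $JB^*$, and $AJB^*$ is compact, hence so is their sum, and therefore the numerator $J - U(\lambda)JU(\omega)^*$ is compact. Dividing by the nonzero scalar $\rho_\omega(\lambda)$ then yields that $K_\omega(\lambda)$ is compact, completing the proof. I do not expect any genuine obstacle beyond correctly bookkeeping the expansion and remembering to pass $U(\omega)^*$ through the adjoint; the hypothesis $\lambda \neq \bar{\omega}$ enters only to guarantee both that the off-diagonal branch of $\eqref{Equation 2.3}$ applies and that the denominator does not vanish.
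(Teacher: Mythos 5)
Your proposal is correct and follows essentially the same route as the paper's proof: substitute $U(\lambda)=I-A$, $U(\omega)=I-B$ into the numerator, cancel the leading $J$, and observe that every surviving term $AJ+JB^*-AJB^*$ carries a compact factor. If anything, you are slightly more careful than the paper, which invokes ``sum and product of compact operators'' where the precise justification is the two-sided ideal property (since $J$ itself is not compact) together with $*$-closedness to handle $U(\omega)^*$, and which leaves the nonvanishing of $\rho_\omega(\lambda)$ implicit.
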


\begin{proof}
Let \( I - U(\lambda) = L \) and \( I - U(\omega) = L' \), where \( L \) and \( L' \) are compact operators. Then,
\[
K_{\omega}(\lambda) = \frac{J - U(\lambda) J U(\omega)^*}{-2 \pi i (\lambda - \bar{\omega})}.
\]
Substituting \( U(\lambda) = I - L \) and \( U(\omega) = I - L' \), we get:
\[
K_{\omega}(\lambda) = \frac{J - (I - L) J (I - L')^*}{-2 \pi i (\lambda - \bar{\omega})}.
\]
Expanding this expression:
\[
K_{\omega}(\lambda) = \frac{J L'^* + L J - L J L'^*}{-2 \pi i (\lambda - \bar{\omega})}.
\]
Since the sum and product of compact operators are compact, it follows that \( K_{\omega}(\lambda) \) is compact.
\end{proof}

%--------------------------------------

\begin{rmk} \label{Remark 3.5}
A straightforward argument based on the definition of compactness shows that if \( A: \mathfrak{X} \rightarrow \mathfrak{X} \oplus \mathfrak{X} \) is defined by
\[
Af = \begin{bmatrix}
A_1 f \\
A_2 f
\end{bmatrix}
\]
and $A$ is compact, then both \( A_1 \) and \( A_2 \) must be compact operators.
\end{rmk}

%------------------------------------

\begin{thm}\label{Theorem 3.6}
Under the hypothesis of Theorem \ref{Theorem 3.3}, the space \( \mathcal{H}_1 \) is a \( \mathcal{H}_0 (U) \) space.
\end{thm}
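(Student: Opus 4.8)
The plan is to lean on what is already established rather than build from scratch. By Theorem~\ref{Theorem 3.3} the space $\mathcal{H}_1$ is already a $\mathcal{H}(U)$ space, so its reproducing kernel coincides with the kernel $K^{U}_\xi(\lambda)$ of \eqref{Equation 2.3}. Membership in the subclass $\mathcal{H}_0(U)$ therefore amounts to the single extra assertion that $I-U(\lambda)$ is \emph{not} compact for every $\lambda\in\Omega$. I would prove this by the contrapositive of Lemma~\ref{Lemma 3.4}: it suffices to locate, for each $\lambda$, a point at which the reproducing kernel of $\mathcal{H}_1$ is non-compact.

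First I would make the reproducing kernel of $\mathcal{H}_1$ explicit through the unitary map $\Psi$ of \eqref{Equation 3.11}. Writing $V_\omega:=\delta_\omega\Psi:\mathfrak{X}\to\mathfrak{X}\oplus\mathfrak{X}$ for the composition of $\Psi$ with point evaluation, one has
\[
V_\omega f=\frac{1}{\sqrt 2}\begin{bmatrix}P_Y(\omega)f\\ \frac{1}{\pi i}\tau_Y(\omega)f\end{bmatrix},
\]
and since $\Psi$ is unitary the kernel factors as $K_\omega(\lambda)=\delta_\lambda\delta_\omega^{*}=V_\lambda V_\omega^{*}$. In particular the diagonal value is the positive operator $K_\omega(\omega)=V_\omega V_\omega^{*}$, which is the object I will test for compactness.

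The core of the argument is that $K_\omega(\omega)$ is non-compact for every $\omega\in\Omega$. Because $T T^{*}$ is compact precisely when $T$ is compact, it is enough to show that $V_\omega$ is non-compact, and here Remark~\ref{Remark 3.5} does the work: if $V_\omega$ were compact, then both of its components $P_Y(\omega)$ and $\tfrac{1}{\pi i}\tau_Y(\omega)$ would be compact. But $P_Y(\omega)$ is the bounded (oblique) projection of $\mathfrak{X}$ onto the infinite dimensional closed subspace $Y$ along $\mathfrak{M}_\omega$, and a bounded idempotent whose range is infinite dimensional can never be compact. Hence $V_\omega$, and therefore $K_\omega(\omega)$, is non-compact.

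Finally I would fix an arbitrary $\omega\in\Omega$ and conclude. By the construction in Remark~\ref{Remark 3.2} (see \eqref{Equation 3.7}) we have $\Omega=\mathbb{C}_+\cup\mathbb{C}_-$, so $\omega\neq\bar\omega$ and Lemma~\ref{Lemma 3.4} is applicable with the two coincident points $\lambda=\omega$. Were $I-U(\omega)$ compact, that lemma would force $K_\omega(\omega)$ to be compact, contradicting the previous paragraph; thus $I-U(\omega)$ is non-compact, and since $\omega$ was arbitrary this holds throughout $\Omega$, which is exactly the defining property of an $\mathcal{H}_0(U)$ space. I expect the main obstacle to be the kernel identity $K_\omega(\lambda)=V_\lambda V_\omega^{*}$ and the clean reduction of non-compactness to the single block $P_Y(\omega)$; once Remark~\ref{Remark 3.5} is in place the projection argument is immediate, and the coincident-point use of Lemma~\ref{Lemma 3.4} needs only that $\omega$ be non-real, which is guaranteed since $\Omega$ omits the real axis.
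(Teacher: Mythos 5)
Your proof is correct in substance and follows essentially the same route as the paper's: both arguments reduce the claim to the non-compactness of $I-U(\lambda)$, observe that the point evaluation (your $V_\lambda=\delta_\lambda\Psi$, which differs from the paper's $\delta_\lambda$ only by the unitary $\Psi$ and a harmless scalar) has the non-compact block $P_Y(\lambda)$ and hence is non-compact by Remark \ref{Remark 3.5}, pass to the diagonal kernel via $K_\lambda(\lambda)=\delta_\lambda\delta_\lambda^*$ and the fact that $TT^*$ is compact precisely when $T$ is, and conclude by the contrapositive of Lemma \ref{Lemma 3.4}.

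One caveat on your final step. You justify the hypothesis $\lambda\neq\bar\omega$ of Lemma \ref{Lemma 3.4} by asserting that $\Omega=\mathbb{C}_+\cup\mathbb{C}_-$, citing Remark \ref{Remark 3.2}. That remark only exhibits \emph{one} admissible choice of the pair $(Y,\Omega)$; the standing hypothesis of Theorems \ref{Theorem 3.3} and \ref{Theorem 3.6} allows any nonempty open $\Omega$, symmetric about the real axis and consisting of $Y$-regular points, and such a set may meet $\mathbb{R}$ (for instance $\Omega=\mathbb{C}$ when $E$ is an entire operator). At a real point $\lambda=\bar\lambda$, the diagonal kernel is given by the derivative formula in \eqref{Equation 2.3} and the coincident-point application of Lemma \ref{Lemma 3.4} is not licensed, so your argument, as written, only covers $\Omega\setminus\mathbb{R}$. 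To be fair, the paper's own proof applies Lemma \ref{Lemma 3.4} on the diagonal with the same silent restriction, so this is a shared blind spot rather than a defect peculiar to your write-up; but your appeal to Remark \ref{Remark 3.2} over-reads what that remark establishes, and the real points of $\Omega$ would need a separate treatment.
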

\begin{proof}
It only needs to be checked that \( I - U(\lambda) \) is not compact for all \( \lambda \in \Omega \). Here, \( \delta_{\lambda}: \mathcal{H}_1 \rightarrow \mathfrak{X} \oplus \mathfrak{X} \) is defined by
\[
\delta_{\lambda}\begin{bmatrix} 
f_Y  \\
\tilde{f}_Y
\end{bmatrix}
= \begin{bmatrix} 
f_Y  \\
\tilde{f}_Y
\end{bmatrix}(\lambda)
= \begin{bmatrix} 
P_{Y}(\lambda) f  \\
\frac{1}{\pi i} \tau_{Y}(\lambda) f
\end{bmatrix}.
\]
Since \( Y \) is infinite-dimensional, this implies that the operator \( P_{Y}(\lambda) \) is not compact. Hence, by previous remark, \( \delta_{\lambda} \) is not compact. Now, since \( K_{\lambda}(\lambda) = \delta_{\lambda} \delta_{\lambda}^* \), \( K_{\lambda}(\lambda) \) is compact if and only if \( \delta_{\lambda} \) is compact. By Lemma \ref{Lemma 3.4}, it follows that \( I - U(\lambda) \) is not compact for all \( \lambda \in \Omega \).
\end{proof}

Corresponding to the space $\mathcal{H}_1$, we define the space $\mathcal{N}_1$ as 
\begin{equation}\label{Equation 3.24}
\mathcal{N}_1= \{f_Y : f \in \mathfrak{X} \}
\end{equation}
where \(f_Y(\xi) = P_Y(\xi)f$ for all $\xi \in \Omega\), with respect to the following inner product
\begin{equation} \label{Equation 3.25}
\langle f_Y, g_Y \rangle_{\mathcal{N}_1} = \langle f, g \rangle_{\mathfrak{X}}.
\end{equation}
Define the operator $\mathfrak{D}_1$ by \begin{eqnarray} \label{Equation 3.26}
\mathfrak{D}_1\begin{bmatrix} 
                       	f_Y  \\
                    	Z_1f_Y \\
                    	\end{bmatrix}   = \begin{bmatrix} 
                       	                  Tf_Y  \\
                    	                  Z_1Tf_Y \\
                    	                   \end{bmatrix} ~~~~\mbox{for all}~~ f \in \mathfrak{X} 
\end{eqnarray} where the operator $Z_1$ defined on the space $\mathcal{N}_1$ is given by 
 \begin{equation}
 \langle (Z_1f_Y)(\xi), l \rangle_{\mathfrak{X}}= \frac{1}{\pi i}\langle R_{\xi}f_Y, l_Y \rangle _{\mathcal{N}_1}
\hspace{0.3cm} \text{for all}~ f_Y \in \mathcal{N}_1, \xi \in \Omega, l \in \mathfrak{X},
\end{equation} 
and the operator $T$ is the multiplication operator by the independent variable defined on the space $\mathcal{N}_1$.
We will come back to the role of the operator $\mathfrak{D}_1$ in the Theorem \ref{Theorem 3.12} in the context of functional models for the operator $E$.

Next, we will construct another space $\mathcal{H}_2$ from a simple, closed, densely defined, symmetric operator $E$ with infinite deficiency indices which consists of $Y \oplus Y$ valued functions defined on $\Omega$. We will observe that this space is also an \( \mathcal{H}_0 (U) \) space. Consider the mapping from $\Omega$ to $Y \oplus Y$ defined as follows:
\begin{equation}
\xi \longrightarrow \begin{bmatrix}
P_{Y}(\xi)f \\
\frac{1}{\pi i} \Pi_{Y} \bigl(\tau_{Y}(\xi)f \bigl) \
\end{bmatrix}
\end{equation}
Here, $\Pi_{Y}(g)$ denotes the orthogonal projection of $g \in \mathfrak{X}$ onto the closed, infinite dimensional subspace $Y$.
Define $\mathcal{H}_2$ as follows: $$\mathcal{H}_2= \biggl \lbrace \begin{bmatrix} 
	f_Y  \\
	\tilde{f}_Y \\
	\end{bmatrix} : f \in \mathfrak{X}\biggl \rbrace$$ such that  $$\begin{bmatrix} 
	f_Y  \\
	\tilde{f}_Y \\
	\end{bmatrix}
	(\xi)=\begin{bmatrix} 
	P_{Y}(\xi)f  \\
	\frac{1}{\pi i} \Pi_{Y} \bigl(\tau_{Y}(\xi)f \bigl) \\
	\end{bmatrix}.$$
Similar to $\mathcal{H}_1$, we observe that $\mathcal{H}_2$ is a Hilbert space of vector valued analytic functions with inner product defined as \begin{equation}
	\left\langle   \begin{bmatrix} 
	f_Y  \\
	\tilde{f}_Y \\
	\end{bmatrix},   \begin{bmatrix} 
	g_Y  \\
	\tilde{g}_Y \\
	\end{bmatrix} \right\rangle_{\mathcal{H}_2} =2 \langle f,g \rangle_\mathfrak{X}.
	\end{equation}
It can be similarly verified that $\mathcal{H}_2$ is an RKHS. The following theorem is analogous to Theorem $\ref{Theorem 3.3}$.

%----------------------------------------
\begin{thm}\label{Theorem 3.7}
Consider a simple, closed, densely defined, symmetric operator $E$ on the Hilbert space $\mathfrak{X}$ with infinite deficiency indices. Let $\mathcal{H}_2$ denote the RKHS constructed as described above. Then $\mathcal{H}_2$ is a $\mathcal{H}_0(U)$ space, with the signature operator $J = J_1$.
\end{thm}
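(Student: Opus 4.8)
The plan is to mirror the two-step argument used for $\mathcal{H}_1$ in Theorems \ref{Theorem 3.3} and \ref{Theorem 3.6}: first check that $\mathcal{H}_2$ satisfies Rovnyak's characterization of Theorem \ref{Theorem 3.1}, and then verify that it lies in the subclass $\mathcal{H}_0(U)$. Since the first coordinate $\xi \mapsto P_Y(\xi)f$ of $\mathcal{H}_2$ coincides with that of $\mathcal{H}_1$, simplicity of $E$ again makes the defining map injective, so $\mathcal{H}_2$ is a Hilbert RKHS and the inner product $2\langle f,g\rangle_{\mathfrak{X}}$ is well defined. Observe that $P_Y(\xi)f \in Y$ by the decomposition \eqref{Equation 3.2} and $\frac{1}{\pi i}\Pi_Y(\tau_Y(\xi)f) \in Y$, so $\mathcal{H}_2$ consists of $Y \oplus Y$ valued functions and here $J = J_1$ denotes the signature operator on $Y \oplus Y$.

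For $R_\alpha$ invariance I would reproduce the computation of Theorem \ref{Theorem 3.3} in the first coordinate verbatim, while in the second coordinate I pull the linear, $\xi$-independent projection $\Pi_Y$ through the difference quotient. Using that $\frac{\tau_Y(\xi)f - \tau_Y(\alpha)f}{\xi-\alpha} = \tau_Y(\xi)h$ for the same $h$ as in that proof (where $f=(E-\alpha I)h+P_Y(\alpha)f$), linearity of $\Pi_Y$ yields $\frac{\Pi_Y(\tau_Y(\xi)f)-\Pi_Y(\tau_Y(\alpha)f)}{\xi-\alpha}=\Pi_Y(\tau_Y(\xi)h)$, so $R_\alpha\begin{bmatrix} f_Y \\ \tilde f_Y\end{bmatrix} = \begin{bmatrix} h_Y \\ \tilde h_Y\end{bmatrix}\in\mathcal{H}_2$.

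The crux is the de Branges identity. Because the inner product formula and the relations $f=(E-\alpha I)h+P_Y(\alpha)f$, $g=(E-\beta I)l+P_Y(\beta)g$ are the same as for $\mathcal{H}_1$, the left-hand side reduces exactly as in Theorem \ref{Theorem 3.3} to $2\langle h, P_Y(\beta)g\rangle - 2\langle P_Y(\alpha)f, l\rangle$, with $h=\tau_Y(\alpha)f$ and $l=\tau_Y(\beta)g$. Evaluating the right-hand side $2\pi i\,\begin{bmatrix} g_Y \\ \tilde g_Y\end{bmatrix}^*(\beta)\,J_1\begin{bmatrix} f_Y \\ \tilde f_Y\end{bmatrix}(\alpha)$ with the $\mathcal{H}_2$ point values produces $2\langle \Pi_Y(\tau_Y(\alpha)f), P_Y(\beta)g\rangle - 2\langle P_Y(\alpha)f, \Pi_Y(\tau_Y(\beta)g)\rangle$. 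The new and decisive step is to remove the extra projections: since $\Pi_Y^*=\Pi_Y$ and the vectors $P_Y(\beta)g$, $P_Y(\alpha)f$ already lie in $Y$, I move $\Pi_Y$ onto them, where it acts as the identity, recovering $2\langle h, P_Y(\beta)g\rangle - 2\langle P_Y(\alpha)f, l\rangle$. Thus both sides agree, and this is the only place the passage from $\mathcal{H}_1$ to $\mathcal{H}_2$ matters; it is settled purely by self-adjointness of $\Pi_Y$ together with $\mathrm{rng}\,P_Y(\xi)\subseteq Y$.

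Finally, to place $\mathcal{H}_2$ in the subclass $\mathcal{H}_0(U)$ I would follow Theorem \ref{Theorem 3.6}. The point evaluation $\delta_\lambda:\mathcal{H}_2\to Y\oplus Y$ has first coordinate the idempotent $P_Y(\lambda)$ with infinite dimensional range $Y$, so $P_Y(\lambda)$ is not compact; by Remark \ref{Remark 3.5} this prevents $\delta_\lambda$ from being compact, hence $K_\lambda(\lambda)=\delta_\lambda\delta_\lambda^*$ is not compact, and Lemma \ref{Lemma 3.4} in contrapositive form shows $I-U(\lambda)$ is not compact for every $\lambda\in\Omega$. Therefore $\mathcal{H}_2$ is an $\mathcal{H}_0(U)$ space with $J=J_1$. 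I expect the de Branges identity computation above to be the only genuine obstacle; everything else transfers directly from the $\mathcal{H}_1$ case.
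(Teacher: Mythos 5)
Your proposal is correct and follows essentially the same route as the paper: $R_\alpha$ invariance by pulling the fixed projection $\Pi_Y$ through the difference quotient, the de Branges identity reduced to matching $2\langle h,P_Y(\beta)g\rangle - 2\langle P_Y(\alpha)f,l\rangle$ against the projected form, and non-compactness of $I-U(\lambda)$ via Remark \ref{Remark 3.5} and Lemma \ref{Lemma 3.4} exactly as in Theorem \ref{Theorem 3.6}. The only cosmetic difference is directional: the paper inserts $\Pi_Y(h)$ and $\Pi_Y(l)$ on the left-hand side using orthogonality of $h-\Pi_Y(h)$ to $Y$, whereas you strip $\Pi_Y$ from the right-hand side using $\Pi_Y^*=\Pi_Y$ and $\mathrm{rng}\,P_Y(\xi)\subseteq Y$ --- the same underlying fact.
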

\begin{proof}
The verification of this theorem follows the same approach as in Theorem $\ref{Theorem 3.3}$. We begin by showing that $\mathcal{H}_2$ is $R_\alpha$ invariant for every $\alpha \in \Omega$.

\begin{eqnarray*}
\left(R_{\alpha} \begin{bmatrix} 
	f_Y  \\
	\tilde{f}_Y \\
	\end{bmatrix}\right) (\xi)\nonumber	
&=&  \begin{bmatrix}
	\frac{P_Y(\xi)f-P_Y(\alpha)f}{\xi-\alpha}  \\
	\frac{\Pi_{Y} \bigl(\tau_Y(\xi)f \bigl)- \Pi_{Y} \bigl(\tau_Y(\alpha)f \bigl)}{\pi i(\xi-\alpha)}\\
	\end{bmatrix}\\
&=&	\begin{bmatrix} 
	P_Y(\xi)h  \\
	\frac{\Pi_{Y} \bigl(\tau_Y(\xi)h \big)}{\pi i} \\
	\end{bmatrix} 
=\begin{bmatrix} 
	h_Y  \\
	\tilde{h}_Y\\
	\end{bmatrix} (\xi)
\end{eqnarray*}
where$$
f
=(E-\xi I)g+P_Y(\xi)f
=(E-\alpha I)h+P_Y(\alpha)f$$
and $$ h=\frac{P_Y(\xi)f - P_Y(\alpha)f}{\xi-\alpha}+(E-\xi I)\left( \frac{g-h}{\xi-\alpha}\right).$$
Next, we show that ($\ref{Equation 3.21}$) holds. L.H.S. of ($\ref{Equation 3.21}$) is given by: 
\begin{eqnarray*}
     2\langle h,P_Y(\beta)g \rangle &-& 2 \langle P_Y(\alpha)f,l \rangle \\
&=&   2\langle h + \Pi_{Y}(h) -\Pi_{Y}(h),P_Y(\beta)g \rangle - 2 \langle P_Y(\alpha)f,l + \Pi_{Y}(l) -\Pi_{Y}(l) \rangle \\
&=&      2\langle \Pi_{Y}(h) , P_Y(\beta)g \rangle - 2 \langle P_Y(\alpha)f,\Pi_{Y}(l) \rangle,
\end{eqnarray*}
where $l$ is defined in ($\ref{Equation 3.23}$). R.H.S. of ($\ref{Equation 3.21}$) is evaluated as: \begin{eqnarray*}
2\pi i  \begin{bmatrix} 
	g_Y  \\
	\tilde{g}_Y \\
	\end{bmatrix}^*(\beta) J  \begin{bmatrix} 
	f_Y  \\
	\tilde{f}_Y \\
	\end{bmatrix}(\alpha) 
&=& 2\pi i  \left\langle J \begin{bmatrix} 
	f_Y  \\
	\tilde{f}_Y \\
	\end{bmatrix}(\alpha) ,  \begin{bmatrix} 
	g_Y  \\
	\tilde{g}_Y \\
	\end{bmatrix}(\beta) \right\rangle \\
&=&	 2 \pi i  \left\langle  \begin{bmatrix}
0 & I \\ 
I & 0
\end{bmatrix} 
\begin{bmatrix} 
	P_Y(\alpha)f  \\
	\frac{\Pi_{Y} \bigl(\tau_Y(\alpha)f \bigl)}{\pi i} \\
	\end{bmatrix} ,  \begin{bmatrix} 
	P_Y(\beta)g  \\
	\frac{\Pi_{Y}\bigl(\tau_Y(\beta)g\bigl)}{\pi i} \\
	\end{bmatrix} \right\rangle \\
&=& 2\langle \Pi_{Y}\bigl(\tau_Y(\alpha)f \bigl) ,P_Y(\beta)g \rangle - 2 \langle P_Y(\alpha)f,\Pi_{Y}\bigl(\tau_Y(\beta)g \bigl) \rangle\\
&=& 2\langle \Pi_{Y}(h) ,P_Y(\beta)g \rangle - 2 \langle P_Y(\alpha)f,\Pi_{Y}(l) \rangle.
\end{eqnarray*}
This completes the verification of $R_\alpha$ invariance of $\mathcal{H}_2$ and the de Branges identity. Also, the fact that $I-U$ is not compact follows similarly from Theorem \ref{Theorem 3.6}.
\end{proof}

Similar to the construction of the space $\mathcal{N}_1$ and the operator $\mathfrak{D}_1$, we define the space $\mathcal{N}_2$ corresponding to the space $\mathcal{H}_2$ as 
\begin{equation} \label{Equation 3.30}
\mathcal{N}_2= \{f_Y : f \in \mathfrak{X} \}
\end{equation}
where \(f_Y(\xi) = P_Y(\xi)f$ for all $\xi \in \Omega\), with respect to the following inner product
\begin{equation} 
\langle f_Y, g_Y \rangle_{\mathcal{N}_2} = \langle f, g \rangle_{\mathfrak{X}},
\end{equation}
and define the operator $\mathfrak{D}_2$ by \begin{eqnarray} \label{Equation 3.32}
\mathfrak{D}_2\begin{bmatrix} 
                       	f_Y  \\
                    	Z_2f_Y \\
                    	\end{bmatrix}   = \begin{bmatrix} 
                       	                  Tf_Y  \\
                    	                  Z_2Tf_Y \\
                    	                   \end{bmatrix} ~~~~\mbox{for all}~~ f \in \mathfrak{X} 
\end{eqnarray} where the operator $Z_2$ defined on the space $\mathcal{N}_2$ is given by 
 \begin{equation}
 \langle (Z_2f_Y)(\xi), l \rangle_{\mathfrak{X}}= \frac{1}{\pi i}\langle R_{\xi}f_Y, (\Pi_Yl)_Y \rangle_{\mathcal{N}_2}
\hspace{0.3cm} \text{for all}~ f_Y \in \mathcal{N}_2, \xi \in \Omega, l \in \mathfrak{X},
\end{equation} 
and the operator $T$ is the multiplication operator by the independent variable defined on the space $\mathcal{N}_2$.
We will come back to the role of the operator $\mathfrak{D}_2$ in the Theorem \ref{Theorem 3.12} in the context of functional models for the operator $E$.

In the following part of this section, which is motivated from \cite{dd} and \cite{mahapatra}, we will establish that the operator \(E\) on \(\mathfrak{X}\) is unitarily equivalent to the operator \(\mathfrak{D}\) (as will be defined by ($\ref{Equation 3.39}$)) on a vector valued de Branges space \(\mathcal{H}_3\).\\
Let $G : \mathfrak{X} \rightarrow \mathfrak{X}$ be a one-one bounded linear operator on $\mathfrak{X}$ with $\mathrm{rng}(G)=Y$. Define the operators $\mathfrak{P}$ and $\mathfrak{R}$ on $\mathfrak{X}$ such that \begin{equation}
\mathfrak{P}f(\xi)=G^{-1}P_{Y}(\xi)f~ ~\mbox{and}~~\mathfrak{R}f(\xi)=\frac{1}{\pi i} G^{*} \tau_{Y}(\xi)f.
\end{equation}
The space $\mathcal{H}_3$ is defined as the  space of vector valued functions on $\Omega$ as follows: $$\mathcal{H}_3= \biggl \lbrace \begin{bmatrix} 
	f_Y  \\
	\tilde{f}_Y \\
	\end{bmatrix} : f \in \mathfrak{X}\biggl \rbrace$$ where  $$\begin{bmatrix} 
	f_Y  \\
	\tilde{f}_Y \\
	\end{bmatrix}
	(\xi)=\begin{bmatrix} 
	\mathfrak{P}f(\xi)  \\
	\mathfrak{R}f(\xi) \\
	\end{bmatrix}
	=
	         \begin{bmatrix} 
	G^{-1}P_{Y}(\xi)f  \\
	\frac{1}{\pi i} G^{*} \tau_{Y}(\xi)f \\
	\end{bmatrix}.$$
The space is equipped with the standard pointwise addition and scalar multiplication. Analogous to $\mathcal{H}_1$ and $\mathcal{H}_2$ , it can be shown that $\mathcal{H}_3$ is an RKHS and similar to Theorem $\ref{Theorem 3.3}$, it can be observed that $\mathcal{H}_3$ is a $\mathcal{H}(U)$ 
space with the signature operator $J_1$.\\
Let 
\[
\mathcal{N} = \{ f_Y : f \in \mathfrak{X} \},
\]
where \(f_Y(\xi) = G^{-1}P_Y(\xi)f$ for all $\xi \in \Omega\), with respect to the following inner product
\begin{equation} \label{Equation 3.35}
\langle f_Y, g_Y \rangle = \langle f, g \rangle.
\end{equation}
\begin{thm}\label{Theorem 3.8} The space $\mathcal{N}$ is a de Branges space $\mathcal{B}(\mathfrak{E})$ of vector valued functions that are analytic on $\Omega$, provided there exists at least one \(\alpha \in  \Omega \cap \mathbb{C}_+\) such that the following conditions hold true:
\begin{equation} \label{Equation 3.36}
(1)~\dim(\mathfrak{M}_{\alpha} \cap \mathfrak{M}_{\omega}^{\perp}) < \infty~ \mbox{and}~ \mathfrak{M}_{\alpha} + \mathfrak{M}_{\omega}^{\perp} ~\mbox{is closed for all}~ \omega \in  \Omega \cap \mathbb{C}_-,
 \end{equation}
 \begin{equation} \label{Equation 3.37}
(2)~\dim(\mathfrak{M}_{\bar{\alpha}} \cap \mathfrak{M}_{\omega}^{\perp}) < \infty ~ \mbox{and}~ \mathfrak{M}_{\bar{\alpha}} + \mathfrak{M}_{\omega}^{\perp} ~\mbox{is closed for all}~ \omega \in \Omega \cap \mathbb{C}_+.
\end{equation}
Additionally, the operator \(E\) is unitarily equivalent to the multiplication operator \(T\) on \(\mathcal{N}\), where \(Tf(\lambda) = \lambda f(\lambda)$ for all $f \in \mathcal{N}\).
\end{thm}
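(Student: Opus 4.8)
The plan is to prove the statement in two stages: first to realize $\mathcal{N}$ as an RKHS whose reproducing kernel has the $\mathcal{B}(\mathfrak{E})$ form \eqref{Equation 2.4}, and then to deduce the unitary equivalence of $E$ with the multiplication operator $T$. First I would fix the Hilbert space structure of $\mathcal{N}$. Consider the map $\Phi : \mathfrak{X} \to \mathcal{N}$, $f \mapsto f_Y$ with $f_Y(\xi) = G^{-1}P_Y(\xi)f$. Since $E$ is simple, the defect subspaces at the non-real points of $\Omega$ span $\mathfrak{X}$ densely, so $\bigcap_{\xi \in \Omega}\mathfrak{M}_\xi = \{0\}$; because $f_Y \equiv 0$ means $f \in \bigcap_\xi \mathfrak{M}_\xi$, the map $\Phi$ is injective and the inner product \eqref{Equation 3.35} is well defined, making $\Phi$ a unitary of $\mathfrak{X}$ onto $\mathcal{N}$. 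In particular $\mathcal{N}$ is complete, point evaluations are bounded since $\Vert f_Y(\xi)\Vert \le \Vert G^{-1}\Vert\,\Vert P_Y(\xi)\Vert\,\Vert f_Y\Vert_{\mathcal{N}}$, and a direct computation with the reproducing property yields
\[
K^{\mathcal{N}}_{\omega}(\lambda) = G^{-1}P_Y(\lambda)\,P_Y(\omega)^{*}\,(G^{-1})^{*}.
\]
Holomorphy of the elements of $\mathcal{N}$ on $\Omega$ will then follow from Lemma \ref{Lemma 2.1}, once the analyticity of $\lambda \mapsto P_Y(\lambda)$ coming from the resolvent of the Shtrauss extension (Remark \ref{Remark 3.2}) is recorded.

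The heart of the argument is to produce a de Branges operator $\mathfrak{E} = (E_-,E_+)$ with $K^{\mathfrak{E}}_\omega = K^{\mathcal{N}}_\omega$. Fixing the point $\alpha \in \Omega\cap\mathbb{C}_+$ furnished by the hypotheses, I would use the reference points $\alpha$ and $\bar\alpha$ to split the kernel across the two half planes and to define $E_+(\omega)$, $E_-(\omega)$ out of $P_Y(\omega)$ relative to $\mathfrak{M}_\alpha$ and $\mathfrak{M}_{\bar\alpha}$, so that the $\rho_\omega(\lambda)$ denominator of \eqref{Equation 2.4} is produced by the associated resolvent identity. With this construction, $\ker E_{\pm}(\omega)$ and the closedness of $\operatorname{rng} E_{\pm}(\omega)$ are controlled by the relative position of $\mathfrak{M}_{\alpha}$ (resp. $\mathfrak{M}_{\bar\alpha}$) and $\mathfrak{M}_\omega^{\perp}$, and the Fredholm condition (1) in the definition of a de Branges operator reduces precisely to the finiteness of $\dim(\mathfrak{M}_\alpha \cap \mathfrak{M}_\omega^{\perp})$ and $\dim(\mathfrak{M}_{\bar\alpha}\cap \mathfrak{M}_\omega^{\perp})$ together with the closedness of $\mathfrak{M}_\alpha + \mathfrak{M}_\omega^{\perp}$ and $\mathfrak{M}_{\bar\alpha}+\mathfrak{M}_\omega^{\perp}$, that is, to \eqref{Equation 3.36}--\eqref{Equation 3.37}, with Theorem \ref{Theorem 2.5} used to pass between $\ker$ and $\ker^{*}$. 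Invertibility at a point (condition (2)) follows from the direct sum decompositions $\mathfrak{X} = \mathfrak{M}_\alpha \dotplus Y$ and $\mathfrak{X} = \mathfrak{M}_{\bar\alpha}\dotplus Y$, while the Schur-type inequality for $E_+^{-1}E_-$ on $\Omega\cap\mathbb{C}_{\pm}$ and its unitarity on $\Omega\cap\mathbb{R}$ (condition (3)) are inherited from the positivity of $K^{\mathcal{N}}_\omega$ and the $J_1$-structure already established for the ambient $\mathcal{H}(U)$ space $\mathcal{H}_3$. Once $K^{\mathfrak{E}}_\omega = K^{\mathcal{N}}_\omega$, the uniqueness part of the vector valued Moore theorem identifies $\mathcal{N}$ with $\mathcal{B}(\mathfrak{E})$ isometrically.

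For the unitary equivalence I would show that $\Phi$ intertwines $E$ and $T$. For $f \in \mathcal{D}(E)$ and $\lambda \in \Omega$ we have $(E-\lambda I)f \in \mathfrak{M}_\lambda = \ker P_Y(\lambda)$, hence $P_Y(\lambda)(Ef) = \lambda P_Y(\lambda)f$ and therefore
\[
(Ef)_Y(\lambda) = G^{-1}P_Y(\lambda)(Ef) = \lambda\, G^{-1}P_Y(\lambda)f = \lambda\, f_Y(\lambda) = (Tf_Y)(\lambda),
\]
so that $\Phi E = T\Phi$ on $\mathcal{D}(E)$; conversely, if $\lambda f_Y(\lambda)\in\mathcal{N}$ then expanding the candidate preimage through $\mathfrak{X} = \mathfrak{M}_\lambda \dotplus Y$ and invoking injectivity of $\Phi$ forces the preimage into $\mathcal{D}(E)$, so the domains correspond and, $\Phi$ being unitary, $E$ is unitarily equivalent to $T$.

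The hard part will be the explicit construction of $E_{\pm}$ and the verification that \eqref{Equation 3.36}--\eqref{Equation 3.37} are exactly the Fredholm conditions: one must convert the geometric ``Fredholm pair'' hypotheses on $(\mathfrak{M}_{\alpha},\mathfrak{M}_\omega^{\perp})$ and $(\mathfrak{M}_{\bar\alpha},\mathfrak{M}_\omega^{\perp})$ into the operator statements on $E_{\pm}(\omega)$, and then confirm that the factorization reproduces $G^{-1}P_Y(\lambda)P_Y(\omega)^{*}(G^{-1})^{*}$ with the correct $\rho_\omega(\lambda)$ denominator while keeping the normalizing operator $G$ under control; checking that $E_+^{-1}E_-$ lands in the Schur class on each half plane is the most delicate point, the remaining steps being routine once the kernel identity is secured.
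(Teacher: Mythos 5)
Your preliminary work is fine and agrees with the paper: the unitary $\psi\colon f \mapsto f_Y$, the boundedness of point evaluations, the kernel formula $K^{\mathcal{N}}_{\omega}(\lambda)=G^{-1}P_Y(\lambda)P_Y(\omega)^*(G^{-1})^*$, and the intertwining computation $P_Y(\lambda)(Ef)=\lambda P_Y(\lambda)f$ are all correct. The problem is that the core of the theorem --- exhibiting a de Branges operator $\mathfrak{E}=(E_-,E_+)$ whose kernel \eqref{Equation 2.4} reproduces $K^{\mathcal{N}}_{\omega}$ --- is precisely the step you never execute. You state that you ``would define $E_+(\omega)$, $E_-(\omega)$ out of $P_Y(\omega)$ relative to $\mathfrak{M}_{\alpha}$ and $\mathfrak{M}_{\bar{\alpha}}$'' and that the hypotheses \eqref{Equation 3.36}--\eqref{Equation 3.37} ``reduce precisely to'' the Fredholm conditions on $E_{\pm}$, but no formula for $E_{\pm}$ is proposed, no resolvent identity producing the $\rho_{\omega}(\lambda)$ denominator is exhibited, and no mechanism is given for converting the semi-Fredholm-pair hypotheses on $(\mathfrak{M}_{\alpha},\mathfrak{M}_{\omega}^{\perp})$, $(\mathfrak{M}_{\bar{\alpha}},\mathfrak{M}_{\omega}^{\perp})$ into statements about $\ker E_{\pm}(\omega)$ and $\mathrm{rng}\, E_{\pm}(\omega)$. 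Since existence of such a pair is the entire content of the assertion ``$\mathcal{N}$ is a $\mathcal{B}(\mathfrak{E})$ space,'' this is a genuine gap, not a deferrable routine step. A second concrete flaw: you claim the Schur-class conditions on $E_+^{-1}E_-$ are ``inherited from the positivity of $K^{\mathcal{N}}_{\omega}$,'' but every reproducing kernel of an RKHS is positive, so positivity alone can never force the kernel into the $\mathcal{B}(\mathfrak{E})$ form; likewise the appeal to ``the $J_1$-structure of $\mathcal{H}_3$'' is circular, since the passage from the two-component space $\mathcal{H}_3$ to its first-component space $\mathcal{N}$ is exactly what needs structure.

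The paper sidesteps the explicit construction altogether by invoking a characterization theorem (Theorem \ref{Theorem 4.1}, a de Branges/Rovnyak-type result extended to operator valued kernels): $\mathcal{N}$ is a $\mathcal{B}(\mathfrak{E})$ space iff the kernels $K_{\alpha}(z)$, $K_{\bar{\alpha}}(z)$ are Fredholm for all $z\in\Omega$, $K_{\alpha}(\alpha)$, $K_{\bar{\alpha}}(\bar{\alpha})$ are invertible, and $\mathcal{N}$ satisfies $R$-invariance together with the isometric isomorphism $T_{\alpha}=I+(\alpha-\bar{\alpha})R_{\alpha}\colon\mathcal{N}_{\alpha}\to\mathcal{N}_{\bar{\alpha}}$. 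The dictionary that makes your geometric hypotheses usable is $K_{\alpha}(z)=\delta_z\delta_{\alpha}^*$, whence $\dim\ker K_{\alpha}(z)=\dim(\mathfrak{M}_z\cap\mathfrak{M}_{\alpha}^{\perp})$ and $\dim\ker K_{\alpha}(z)^*=\dim(\mathfrak{M}_{\alpha}\cap\mathfrak{M}_z^{\perp})$; finiteness for \emph{all} $z\in\Omega$ then requires Shtraus's lemma ($\mathfrak{M}_{\zeta}\cap\mathfrak{M}_{\xi}^{\perp}=\{0\}$ for $\zeta,\xi$ in the same half plane), the Remark \ref{Remark 3.2} argument for real points, and generalized Cayley transforms to transfer the finiteness hypotheses across half planes --- an interplay your sketch does not engage with. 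Closedness of $\mathrm{rng}\,K_{\alpha}(z)$ comes from Izumino's criterion (the range of $\delta_z\delta_{\alpha}^*$ is closed iff $\ker\delta_z+\mathrm{rng}\,\delta_{\alpha}^*$ is closed), which is exactly where the closedness assumptions in \eqref{Equation 3.36}--\eqref{Equation 3.37} enter; invertibility of $K_{\alpha}(\alpha)$ follows from Theorem \ref{Theorem 2.5}(2) (index zero, trivial kernel). To repair your proposal you would either need to carry out this kernel-level verification (i.e., essentially reproduce the paper's proof) or genuinely construct $E_{\pm}$ from resolvents of a self-adjoint extension and prove the factorization identity --- neither of which is present in what you wrote.
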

\begin{proof}We have included this proof in the Appendix.
\end{proof}
\begin{rmk} Here, we want to remark that the proof of the above theorem: $\mathcal{N}$ is a $\mathcal{B}(\mathfrak{E})$ space and $E$ is unitarily equivalent to the multiplication operator by the independent variable, essentially follows from Theorem $10.3$ in \cite{mahapatra}, where the required hypothesis $\mathfrak{M}_{\alpha} + \mathfrak{M}_{\omega}^{\perp}$ is closed for all $ \omega \in  \Omega \cap \mathbb{C}_-$ and $\mathfrak{M}_{\bar{\alpha}} + \mathfrak{M}_{\omega}^{\perp} $ is closed for all $\omega \in \Omega \cap \mathbb{C}_+$, were absent. We have stated this theorem by adding these slight modifications of the hypothesis. The proof is essentially the same and for convenience we have also added the proof in the Appendix.
   \end{rmk}
\begin{rmk}Note that the pairs $(\mathfrak{M}_{\alpha} , \mathfrak{M}_{\omega}^{\perp})$ and $(\mathfrak{M}_{\bar{\alpha}} , \mathfrak{M}_{\omega}^{\perp})$ satisfying the conditions \eqref{Equation 3.36} and \eqref{Equation 3.37} are semi-Fredholm. Recall that a pair $(M,N)$, where $M$ and $N$ are closed subspaces in a Banach space, is called semi-Fredholm if $M+N$ is a closed subspace and atleast one of the $\dim(M \cap N)$ and $\mathrm{codim}(M+N)$ is finite. For a detailed discussion on pairs of semi-Fredholm subspaces, we refer to \cite[Chapter $4$, Section $4$]{kato}.
\end{rmk}
Consider the following equalities:
\begin{eqnarray*}
(\Psi(Ef)(\xi)) &=&  \frac{1}{\sqrt{2}}
                    \begin{bmatrix} 
                       	(Ef)_Y(\xi)  \\
                    	\tilde{(Ef)}_Y(\xi) \\
                    	\end{bmatrix}
               =  \frac{1}{\sqrt{2}} \begin{bmatrix} 
	                   G^{-1}P_{Y}(\xi)(Ef)  \\   	                      \frac{1}{\pi i} G^*\tau_{Y}(\xi)(Ef) \\
                	\end{bmatrix}	\\
                	 &=& \frac{1}{\sqrt{2}} \begin{bmatrix} 
	             \mathfrak{P}Ef(\xi)  \\
                    	\mathfrak{R}Ef(\xi) \\
	                  \end{bmatrix}
               \overset{\mbox{\ding{172}}}{=} \frac{1}{\sqrt{2}}\begin{bmatrix} 
                 	( \mathfrak{P}(Ef))(\xi)  \\
                 	(Z(\mathfrak{P}(Ef)))(\xi) \\
                 	\end{bmatrix}	\\ 
              & \overset{\mbox{\ding{173}}}{=}&  	\frac{1}{\sqrt{2}}\begin{bmatrix} 
                 	T \mathfrak{P}f(\xi)  \\
                 	ZT\mathfrak{P}f(\xi) \\
                 	\end{bmatrix}	,     	             
\end{eqnarray*}
where $Z$ is an $\mathfrak{X}$-valued function defined on the space $\mathcal{B}(\mathfrak{E})$ and given by 
 \begin{equation} \label{Equation 3.38}
 \langle (Zf_Y)(\xi), l \rangle_{\mathfrak{X}}= \frac{1}{\pi i}\langle R_{\xi}f_Y, (Gl)_Y \rangle_{\mathcal{B}(\mathfrak{E})} 
\hspace{0.3cm} \text{for all}~ f_Y \in \mathcal{B}(\mathfrak{E}), \xi \in \Omega, l \in \mathfrak{X}.
\end{equation}
The equality \ding{172} holds true as follows:
\begin{eqnarray*}
\langle (Z \mathfrak{P}f)(\xi), l \rangle
    &=& \frac{1}{\pi i} \langle R_{\xi} \mathfrak{P}f, (Gl)_Y \rangle  \\
    &=& \frac{1}{\pi i} \langle \mathfrak{P}g, \mathfrak{P}Gl \rangle_{\mathcal{B}(\mathfrak{E})} \\
    &=& \frac{1}{\pi i} \langle g, Gl \rangle_{\mathfrak{X}} \\
    &=& \frac{1}{\pi i} \langle G^*g, l \rangle \\
    &=& \langle \mathfrak{R}f(\xi), l \rangle \quad \text{for all}~ f \in \mathfrak{X}.
\end{eqnarray*}
The reason for equality \ding{173} is stated as follows: The operator $\mathfrak{P}$ on $\mathfrak{X}$ is a unitary operator since $E$ is simple, and for all $f \in \mathfrak{X}$ and $\xi \in \Omega$, the equality $\mathfrak{P}Ef(\xi)= T\mathfrak{P}f(\xi)$ holds true since
\begin{eqnarray*}
\mathfrak{P}Ef(\xi) &=&  G^{-1}P_{Y}(\xi)(Ef)\\
&=& G^{-1}P_{Y}(\xi)(Ef- \xi f+ \xi f)\\
&=& G^{-1}P_{Y}(\xi)\biggl(Ef- \xi f+ \xi \bigl( (E-\xi)h+P_{Y}(\xi)f \bigl) \biggl)\\
&=& G^{-1}P_{Y}(\xi)\bigl( (E- \xi)( f+ \xi h)+\xi P_{Y}(\xi)f \bigl)\\
&=& G^{-1}\xi P_{Y}(\xi)f\\
&=& \xi \mathfrak{P}f(\xi)\\
&=& T\mathfrak{P}f(\xi).
\end{eqnarray*}
This implies $$  \Psi Ef(\xi) =\frac{1}{\sqrt{2}} \mathfrak{D}                                  \begin{bmatrix} 
                       	\mathfrak{P}f(\xi)  \\
                    	Z\mathfrak{P}f(\xi) \\
                    	\end{bmatrix} =\frac{1}{\sqrt{2}}\mathfrak{D}                                  \begin{bmatrix} 
	                   G^{-1}P_{Y}(\xi)f  \\   	                      \frac{1}{\pi i} G^*\tau_{Y}(\xi)f \\
                	\end{bmatrix}	=  \mathfrak{D}                                 (\Psi f)(\xi),$$
where \begin{eqnarray} \label{Equation 3.39}
\mathfrak{D}\begin{bmatrix}
                       	f_Y  \\
                    	Zf_Y \\
                    	\end{bmatrix}   = \begin{bmatrix} 
                       	                  Tf_Y  \\
                    	                  ZTf_Y \\
                    	                   \end{bmatrix} ~~~~\mbox{for all}~~ f \in \mathfrak{X} .
\end{eqnarray}            	                   
We summarize the discussions above in the following theorem.
\begin{thm}\label{Theorem 3.11}
Let $E$ be a simple, closed, densely defined, symmetric operator on a Hilbert space $\mathfrak{X}$ with infinite deficiency indices. Let $\mathcal{H}_3$ be the RKHS constructed as described above. Then, the following statements hold:
\begin{itemize}
\item[(1)] $\mathcal{H}_3$ is a $ \mathcal{H}_0(U)$ space, with the signature operator $J= J_1$.
\item[(2)] If there exists at least one \(\alpha \in \Omega \cap \mathbb{C}_+\) such that the following conditions hold true:
\begin{itemize}
\item[i)] \(\dim(\mathfrak{M}_{\alpha} \cap \mathfrak{M}_{\omega}^{\perp}) < \infty\) and $\mathfrak{M}_{\alpha} + \mathfrak{M}_{\omega}^{\perp}$ is closed for all \(\omega \in  \Omega \cap \mathbb{C}_-\),
\item[ii)] \(\dim(\mathfrak{M}_{\bar{\alpha}} \cap \mathfrak{M}_{\omega}^{\perp}) < \infty\) and $\mathfrak{M}_{\bar{\alpha}} + \mathfrak{M}_{\omega}^{\perp}$ is closed for all \(\omega \in \Omega \cap \mathbb{C}_+\),
\end{itemize} then the operator $E$ on $\mathfrak{X}$ is unitarily equivalent to the operator $\mathfrak{D}$ on the de Branges space $\mathcal{H}_3\).
\end{itemize}
\end{thm}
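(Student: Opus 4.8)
The plan is to prove the two assertions separately, in each case reducing to constructions and identities already assembled above. For assertion (1) I would argue in exact parallel with Theorems \ref{Theorem 3.3} and \ref{Theorem 3.6}. Because the maps $\mathfrak{P}$ and $\mathfrak{R}$ defining $\mathcal{H}_3$ differ from those defining $\mathcal{H}_1$ only by the fixed bounded factors $G^{-1}$ and $G^{*}$, the space $\mathcal{H}_3$ is again a Hilbert space of $\mathfrak{X}$-valued analytic functions on $\Omega$ with bounded point evaluations, hence an RKHS, and the verification that it is $R_\alpha$-invariant and satisfies the de Branges identity of Theorem \ref{Theorem 3.1} repeats the computation of Theorem \ref{Theorem 3.3} once the bookkeeping factors $G^{-1},G^{*}$ are carried through; thus $\mathcal{H}_3$ is a $\mathcal{H}(U)$ space with $J=J_1$. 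To upgrade to $\mathcal{H}_0(U)$ I would show that $I-U(\lambda)$ is not compact for each $\lambda\in\Omega$. The point evaluation $\delta_\lambda$ sends $f$ to $\bigl(G^{-1}P_Y(\lambda)f,\ \tfrac{1}{\pi i}G^{*}\tau_Y(\lambda)f\bigr)$; since $P_Y(\lambda)$ is the projection onto the infinite dimensional subspace $Y$ along $\mathfrak{M}_\lambda$ it restricts to the identity on $Y$ and is therefore not compact, and as $G$ is bounded the factorization $P_Y(\lambda)=G\bigl(G^{-1}P_Y(\lambda)\bigr)$ shows that $G^{-1}P_Y(\lambda)$ is not compact either. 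By Remark \ref{Remark 3.5} this forces $\delta_\lambda$ to be non-compact, whence $K_\lambda(\lambda)=\delta_\lambda\delta_\lambda^{*}$ is non-compact, and Lemma \ref{Lemma 3.4} then rules out compactness of $I-U(\lambda)$. Hence $\mathcal{H}_3$ is a $\mathcal{H}_0(U)$ space.

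For assertion (2) I would assemble the unitary equivalence from the intertwining computation carried out immediately before the statement. Under hypotheses (i)--(ii), Theorem \ref{Theorem 3.8} guarantees that $\mathcal{N}$ is a $\mathcal{B}(\mathfrak{E})$ space; this is precisely what makes the operator $Z$ of \eqref{Equation 3.38}, and therefore $\mathfrak{D}$ of \eqref{Equation 3.39}, well defined, since the defining relation of $Z$ uses the inner product of $\mathcal{B}(\mathfrak{E})$. The map $\Psi\colon\mathfrak{X}\to\mathcal{H}_3$ of \eqref{Equation 3.11} is unitary by the argument used for $\mathcal{H}_1$, and $\mathfrak{P}$ is unitary because $E$ is simple. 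The chain of equalities preceding the statement---in particular the two marked identities, which rest on the defining relation of $Z$ and on $\mathfrak{P}Ef=T\mathfrak{P}f$---establishes $\Psi Ef=\mathfrak{D}(\Psi f)$ for every $f\in\mathfrak{X}$. Since $\Psi$ is unitary, this says exactly that $E$ is unitarily equivalent to $\mathfrak{D}$ on $\mathcal{H}_3$.

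The genuinely delicate point, and the reason hypotheses (i)--(ii) are imposed, is the well-definedness of $\mathfrak{D}$: only once Theorem \ref{Theorem 3.8} certifies the $\mathcal{B}(\mathfrak{E})$-space structure of $\mathcal{N}$ can $Z$, and hence the second coordinate of $\mathfrak{D}$, be recognized as a bona fide operator. The remaining ingredients are routine transcriptions of the $\mathcal{H}_1$ arguments, with the only other substantive inputs being the unitarity of $\mathfrak{P}$ and the identity $\mathfrak{P}Ef=T\mathfrak{P}f$, both of which rely on the simplicity of $E$.
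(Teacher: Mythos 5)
Your proposal is correct and follows essentially the same route as the paper: part (1) is handled exactly as in the paper's proof, by noting that $G^{-1}P_Y(\xi)$ cannot be compact since $Y$ is infinite dimensional (your factorization $P_Y(\lambda)=G\bigl(G^{-1}P_Y(\lambda)\bigr)$ just makes explicit what the paper leaves as an observation) and then invoking Remark \ref{Remark 3.5} and Lemma \ref{Lemma 3.4} as in Theorem \ref{Theorem 3.6}; part (2) is, as in the paper, precisely the intertwining identity $\Psi Ef=\mathfrak{D}(\Psi f)$ established in the discussion preceding the theorem, with hypotheses (i)--(ii) entering through Theorem \ref{Theorem 3.8} to give $\mathcal{N}$ its $\mathcal{B}(\mathfrak{E})$ structure and the unitarity of $\mathfrak{P}$ and $\Psi$.
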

\begin{proof}
The fact that \( \mathcal{H}_3 \) is a \( \mathcal{H}_{0}(U) \) space follows from the observation that since \( Y \) is infinite-dimensional, the operator \( G^{-1}P_{Y}(\xi) \) is not compact. The remainder of the proof follows from the above discussions.
\end{proof}

In the next theorem, we provide functional models for the operator $E$ based on the vector valued de Brange spaces $\mathcal{H}_1$ and $\mathcal{H}_2$.

\begin{thm}\label{Theorem 3.12}Let $E$ be a simple, closed, densely defined, symmetric operator on a Hilbert space $\mathfrak{X}$ with infinite deficiency indices. Let $\mathcal{H}_1$ and $\mathcal{H}_2$ be the RKHS constructed as described above. If there exists at least one \(\alpha \in \Omega \cap \mathbb{C}_+\) such that the following conditions hold true:
\begin{itemize}
\item[i)] \(\dim(\mathfrak{M}_{\alpha} \cap \mathfrak{M}_{\omega}^{\perp}) < \infty\) and $\mathfrak{M}_{\alpha} + \mathfrak{M}_{\omega}^{\perp}$ is closed for all \(\omega \in  \Omega \cap \mathbb{C}_-\),
\item[ii)] \(\dim(\mathfrak{M}_{\bar{\alpha}} \cap \mathfrak{M}_{\omega}^{\perp}) < \infty\) and $\mathfrak{M}_{\bar{\alpha}} + \mathfrak{M}_{\omega}^{\perp}$ is closed for all \(\omega \in \Omega \cap \mathbb{C}_+\),
\end{itemize} then \begin{itemize}
\item[(1)] the operator $E$ on $\mathfrak{X}$ is unitarily equivalent to the operator $\mathfrak{D}_1$, as given by \eqref{Equation 3.26}, on the de Branges space $\mathcal{H}_1\),
\item[(2)] the operator $E$ on $\mathfrak{X}$ is unitarily equivalent to the operator $\mathfrak{D}_2$, as given by \eqref{Equation 3.32}, on the de Branges space $\mathcal{H}_2\).
\end{itemize}
\end{thm}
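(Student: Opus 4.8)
The plan is to run, for both $\mathcal{H}_1$ and $\mathcal{H}_2$, the same intertwining argument that is carried out for $\mathcal{H}_3$ in the discussion preceding Theorem \ref{Theorem 3.11}, specialized by taking $G = I$ (so that $\mathfrak{P}f = f_Y$ and $\mathfrak{R}f = \tilde{f}_Y$ become the two coordinates of the defining map of $\mathcal{H}_1$) and, in the case of $\mathcal{H}_2$, by inserting the orthogonal projection $\Pi_Y$ in the second coordinate. Since $E$ is simple, the map $\Psi$ of \eqref{Equation 3.11} is already a unitary operator from $\mathfrak{X}$ onto $\mathcal{H}_1$; the hypotheses i) and ii) feed, through Theorem \ref{Theorem 3.8} read with $G = I$, into the fact that $\mathcal{N}_1$ is a $\mathcal{B}(\mathfrak{E})$ space on which $E$ is unitarily equivalent to the multiplication operator $T$, which is what makes the (unbounded) operator $T$ well defined and matches $\mathcal{D}(E)$ with $\mathcal{D}(T)$. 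Granting this, everything reduces to verifying the intertwining relation $\Psi(Ef) = \mathfrak{D}_1(\Psi f)$ for $f \in \mathcal{D}(E)$, after which unitary equivalence of $E$ and $\mathfrak{D}_1$ is immediate.

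First I would record the two structural identities that make $\mathfrak{D}_1$ act as intended. The analogue of equality \ding{172} is $\tilde{f}_Y = Z_1 f_Y$: fixing $\xi \in \Omega$, the $R_\alpha$-invariance computation of Theorem \ref{Theorem 3.3} read with $\alpha = \xi$ identifies $R_\xi f_Y$ with $(\tau_Y(\xi)f)_Y$ inside $\mathcal{N}_1$, so that, using $\langle f_Y, g_Y\rangle_{\mathcal{N}_1} = \langle f, g\rangle_{\mathfrak{X}}$,
\[
\langle (Z_1 f_Y)(\xi), l\rangle_{\mathfrak{X}} = \frac{1}{\pi i}\langle R_\xi f_Y, l_Y\rangle_{\mathcal{N}_1} = \frac{1}{\pi i}\langle \tau_Y(\xi)f, l\rangle_{\mathfrak{X}}
\]
for all $l \in \mathfrak{X}$, whence $(Z_1 f_Y)(\xi) = \frac{1}{\pi i}\tau_Y(\xi)f = \tilde{f}_Y(\xi)$. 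The analogue of equality \ding{173} is the multiplier identity $T f_Y = (Ef)_Y$, i.e. $\xi P_Y(\xi)f = P_Y(\xi)(Ef)$ for $f \in \mathcal{D}(E)$; this is exactly the computation done before Theorem \ref{Theorem 3.11} with $G = I$, the decisive point being that $P_Y(\xi)$ annihilates $\mathfrak{M}_\xi = \mathrm{rng}(E - \xi I)$.

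With these in hand I would finish part (1) by the chain
\[
\mathfrak{D}_1(\Psi f) = \frac{1}{\sqrt{2}}\,\mathfrak{D}_1\begin{bmatrix} f_Y \\ Z_1 f_Y\end{bmatrix} = \frac{1}{\sqrt{2}}\begin{bmatrix} T f_Y \\ Z_1 T f_Y\end{bmatrix} = \frac{1}{\sqrt{2}}\begin{bmatrix} (Ef)_Y \\ Z_1 (Ef)_Y\end{bmatrix} = \frac{1}{\sqrt{2}}\begin{bmatrix} (Ef)_Y \\ \widetilde{(Ef)}_Y\end{bmatrix} = \Psi(Ef),
\]
where the third equality is the multiplier identity and the fourth is the $Z_1$-identity applied to $Ef$. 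Because $\Psi$ is unitary, intertwines $E$ with $\mathfrak{D}_1$, and satisfies $\Psi(\mathcal{D}(E)) = \mathcal{D}(\mathfrak{D}_1)$, the operators $E$ and $\mathfrak{D}_1$ are unitarily equivalent. Part (2) is obtained by the identical scheme on $\mathcal{H}_2$: the first coordinate, and hence the multiplier identity $T f_Y = (Ef)_Y$, is unchanged, while the $Z_2$-identity now reads $(Z_2 f_Y)(\xi) = \frac{1}{\pi i}\Pi_Y(\tau_Y(\xi)f)$, matching the second coordinate of $\mathcal{H}_2$ since $\Pi_Y$ is a self-adjoint projection and $\langle R_\xi f_Y, (\Pi_Y l)_Y\rangle_{\mathcal{N}_2} = \langle \tau_Y(\xi)f, \Pi_Y l\rangle_{\mathfrak{X}} = \langle \Pi_Y(\tau_Y(\xi)f), l\rangle_{\mathfrak{X}}$.

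I expect the only delicate point to be the $Z_1$/$Z_2$ identity (the \ding{172}-analogue): it rests on correctly identifying $R_\xi f_Y$ with $(\tau_Y(\xi)f)_Y$ in $\mathcal{N}_1$ (resp. $\mathcal{N}_2$) --- an instance of the already-established backward-shift invariance --- and on the book-keeping that transports the $\mathcal{N}$-pairing against $l_Y$ (resp. $(\Pi_Y l)_Y$) to the $\mathfrak{X}$-inner product. The multiplier identity is then routine, and the role of i)--ii) is confined to guaranteeing, via Theorem \ref{Theorem 3.8}, that $T$ is a well-defined operator with the correct domain so that the intertwining genuinely expresses unitary equivalence of unbounded operators.
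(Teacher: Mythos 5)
Your proposal is correct and follows essentially the same route as the paper: both reduce to showing $\mathcal{N}_1,\mathcal{N}_2$ are $\mathcal{B}(\mathfrak{E})$ spaces via the argument of Theorem \ref{Theorem 3.8}, establish the identities $Z_1 f_Y = \tilde{f}_Y$ (resp.\ $Z_2 f_Y = \tilde{f}_Y$, using self-adjointness of $\Pi_Y$) through the same pairing computation with $R_\xi f_Y = (\tau_Y(\xi)f)_Y$, and then verify the intertwining $\Psi(Ef) = \mathfrak{D}_i(\Psi f)$ exactly as in the discussion preceding Theorem \ref{Theorem 3.11}. The only cosmetic difference is that you phrase the first step as ``Theorem \ref{Theorem 3.8} read with $G=I$'' whereas the paper invokes ``arguments similar to Theorem \ref{Theorem 3.8}'' (since that theorem's $G$ is required to have range $Y$); the substance is identical.
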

\begin{proof}
By the arguments similar to those in Theorem \ref{Theorem 3.8}, we deduce that the spaces \( \mathcal{N}_1 \) and \( \mathcal{N}_2 \), as described in \eqref{Equation 3.24} and \eqref{Equation 3.30}, are the de Branges spaces \( \mathcal{B}(\mathfrak{E}) \). For the proof of $(1)$, observe that the following holds true for all \( f_Y \in \mathcal{B}(\mathfrak{E}) \), \( \xi \in \Omega \), and \( l \in \mathfrak{X} \): 
\[
\langle (Z_1 f_Y)(\xi), l \rangle = \frac{1}{\pi i}\langle R_{\xi} f_Y, l_Y \rangle = \frac{1}{\pi i}\langle g_Y, l_Y \rangle =\frac{1}{\pi i} \langle g, l \rangle = \langle \tilde{f}_Y(\xi), l \rangle_{\mathfrak{X}},  
\]  
where $g$ is such that $f = (E - \xi I)g + P_Y(\xi)f$. 

Additionally,
\begin{eqnarray*}
\Psi Ef(\xi) = \frac{1}{\sqrt{2}}
                    \begin{bmatrix} 
                       	(Ef)_Y(\xi)  \\
                    	\tilde{(Ef)}_Y(\xi) \\
                    	\end{bmatrix}
               &=&  \frac{1}{\sqrt{2}} \begin{bmatrix} 
	                   P_{Y}(\xi)(Ef)  \\   	                      \frac{1}{\pi i} \tau_{Y}(\xi)(Ef) \\
                	\end{bmatrix}	\\
                	&=&  \frac{1}{\sqrt{2}} \begin{bmatrix} 
	                   (Tf_{Y})(\xi) \\   	                                                (Z_1Tf_{Y})(\xi) \\
                	\end{bmatrix}	\\
                	&=&\frac{1}{\sqrt{2}} \mathfrak{D}_1                                  \begin{bmatrix} 
                       	f_Y(\xi)  \\
                    	Z_1f_Y(\xi) \\
                    	\end{bmatrix} \\                   
                	&=& \frac{1}{\sqrt{2}}\mathfrak{D}_1 
\begin{bmatrix}
	                 f_{Y}(\xi)  \\   	                                                \tilde{f}_{Y}(\xi) \\
                	\end{bmatrix}=  \mathfrak{D}_1                                 (\Psi f)(\xi).	                	
\end{eqnarray*}
Following the same reasoning as in Theorem \ref{Theorem 3.11}, we conclude that the operator \( E \) on \( \mathfrak{X} \) is unitarily equivalent to the operator \( \mathfrak{D}_1 \) on the de Branges space \( \mathcal{H}_1 \). For the proof of $(2)$, observe that the following holds true for all \( f_Y \in \mathcal{B}(\mathfrak{E}) \), \( \xi \in \Omega \), and \( l \in \mathfrak{X} \): 
\begin{eqnarray*}
\langle (Z_2 f_Y)(\xi), l \rangle = \frac{1}{\pi i}\langle R_{\xi} f_Y, (\Pi_Yl)_Y \rangle 
&=&\frac{1}{\pi i}\langle g_Y, (\Pi_{Y}l)_Y \rangle\\
&=&\frac{1}{\pi i} \langle g, \Pi_{Y}l \rangle\\
&=& \frac{1}{\pi i}\langle \Pi_{Y}g, l \rangle\\
 &=& \langle \tilde{f}_Y(\xi), l \rangle_{\mathfrak{X}},  
\end{eqnarray*} 
where $g$ is such that $f = (E - \xi I)g + P_Y(\xi)f$. The remainder of the proof for $(2)$ follows similarly as in $(1)$.
\end{proof}

We end this section by making a small discussion about another functional model for entire operators which is realized in a de Branges-Rovnyak space.

\begin{thm}
Let $E$ be an entire operator on a Hilbert space $\mathfrak{X}$ with infinite deficiency indices. If there exists at least one \(\alpha \in \mathbb{C}_+\) such that the following conditions hold true:
\begin{itemize}
\item[i)] \(\dim(\mathfrak{M}_{\alpha} \cap \mathfrak{M}_{\omega}^{\perp}) < \infty\) and $\mathfrak{M}_{\alpha} + \mathfrak{M}_{\omega}^{\perp}$ is closed for all \(\omega \in   \mathbb{C}_-\),
\item[ii)] \(\dim(\mathfrak{M}_{\bar{\alpha}} \cap \mathfrak{M}_{\omega}^{\perp}) < \infty\) and $\mathfrak{M}_{\bar{\alpha}} + \mathfrak{M}_{\omega}^{\perp}$ is closed for all \(\omega \in  \mathbb{C}_+\),
\end{itemize}then $E$ is unitarily equivalent to the multiplication operator by the independent variable $\tilde{T}$ on the de Branges-Rovnyak space $\mathcal{H}(S)$, where $S=E_+^{-1}E_- \in S_{B(\mathfrak{X})}(\mathbb{C}_+) $, corresponding to a de Branges operator $(E_- ,E_+)$.
\end{thm}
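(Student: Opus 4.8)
The plan is to realize the desired equivalence $E \cong \tilde T$ by passing through the $\mathcal{B}(\mathfrak{E})$ model that is already available and then exploiting the classical multiplicative link between $\mathcal{B}(\mathfrak{E})$ spaces and de Branges--Rovnyak spaces. First I would invoke Theorem \ref{Theorem 3.8} together with the construction preceding Theorems \ref{Theorem 3.11} and \ref{Theorem 3.12}. Since $E$ is entire we have $\Omega = \mathbb{C}$, and under the hypotheses (i)--(ii) the space $\mathcal{N}$ is a de Branges space $\mathcal{B}(\mathfrak{E})$ attached to a de Branges operator $\mathfrak{E}=(E_-,E_+)$ of entire $B(\mathfrak{X})$-valued functions, with $E$ unitarily equivalent to the multiplication operator $T$ by the independent variable on $\mathcal{B}(\mathfrak{E})$. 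Hence it suffices to exhibit a unitary from $\mathcal{H}(S)$ onto $\mathcal{B}(\mathfrak{E})$ that intertwines $\tilde T$ with $T$.

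Next I would verify that $S := E_+^{-1}E_-$ belongs to $S_{B(\mathfrak{X})}(\mathbb{C}_+)$. The point to secure is that $E_+(\lambda)$ is invertible for every $\lambda \in \mathbb{C}_+$; this is precisely the region where the contractivity condition (3) of the de Branges operator is imposed and where $E_+^{-1}E_-$ is meaningful. Granting this, $S$ is holomorphic on $\mathbb{C}_+$ and condition (3) gives $S(\lambda)^*S(\lambda) \preceq I$, so $\Vert S(\lambda)\Vert\le 1$; thus $S \in S_{B(\mathfrak{X})}(\mathbb{C}_+)$, and the kernel $\Gamma_\xi^S$ and the space $\mathcal{H}(S)$ are well defined.

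The core computation is a kernel identity. For $\lambda,\xi \in \mathbb{C}_+$, using $E_+(\lambda)S(\lambda)=E_-(\lambda)$ and $S(\xi)^*E_+(\xi)^*=E_-(\xi)^*$, one finds
\[
E_+(\lambda)\,\Gamma_\xi^S(\lambda)\,E_+(\xi)^* = \frac{E_+(\lambda)E_+(\xi)^* - E_-(\lambda)E_-(\xi)^*}{\rho_\xi(\lambda)} = K_\xi^{\mathfrak{E}}(\lambda).
\]
Since $E_+(\lambda)$ is invertible on $\mathbb{C}_+$, this identity shows that the map $f \mapsto E_+f$ is a unitary from $\mathcal{H}(S)$ onto the RKHS on $\mathbb{C}_+$ whose reproducing kernel is $K_\xi^{\mathfrak{E}}$. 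Because every element of $\mathcal{B}(\mathfrak{E})$ is entire and $\mathbb{C}_+$ has limit points, the restriction map $\mathcal{B}(\mathfrak{E}) \to \mathcal{B}(\mathfrak{E})\vert_{\mathbb{C}_+}$ is injective and is therefore a unitary onto that same RKHS. Composing the two unitaries produces a unitary $V : \mathcal{H}(S) \to \mathcal{B}(\mathfrak{E})$.

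Finally, both multiplication by $E_+$ and restriction to $\mathbb{C}_+$ commute with multiplication by the independent variable, so $V\tilde T = T V$, with the maximal multiplication domains corresponding under $V$. Chaining the equivalences $E \cong T \cong \tilde T$ then yields the assertion that $E$ is unitarily equivalent to $\tilde T$ on $\mathcal{H}(S)$. I expect the main obstacle to be establishing the invertibility of $E_+$ throughout $\mathbb{C}_+$, which is what guarantees both that $S$ is genuinely of Schur class and that $V$ is a bijection; a secondary technical point is the careful matching of the domains of the unbounded operators $T$ and $\tilde T$ under the unitary $V$.
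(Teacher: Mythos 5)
Your proposal is correct and takes essentially the same route as the paper: both first invoke Theorem \ref{Theorem 3.8} to get $E$ unitarily equivalent to the multiplication operator $T$ on $\mathcal{N}=\mathcal{B}(\mathfrak{E})$, then transfer to $\mathcal{H}(S)$ via multiplication by $E_+^{-1}$ and verify the same intertwining identity $M_{E_+^{-1}}T=\tilde{T}M_{E_+^{-1}}$. The only difference is that the paper cites Theorem $3.10$ of \cite{multi} for the unitarity of $M_{E_+^{-1}}:\mathcal{B}(\mathfrak{E})\rightarrow\mathcal{H}(S)$, whereas you prove it directly from the kernel identity $E_+(\lambda)\Gamma_\xi^S(\lambda)E_+(\xi)^*=K_\xi^{\mathfrak{E}}(\lambda)$ together with the injectivity of restriction of entire functions to $\mathbb{C}_+$, which is a sound self-contained substitute for that citation.
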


\begin{proof}
By Theorem \ref{Theorem 3.8}, the operator $E$ is unitarily equivalent to the multiplication operator by the independent variable $T$ on the space $\mathcal{N}$, which is a de Branges space $\mathcal{B}(\mathfrak{E})$. The proof then follows from Theorem $3.10$ in \cite{multi}, which asserts that the operator $$M_{E_+^{-1}}: \mathcal{B}(\mathfrak{E}) \rightarrow \mathcal{H}(S),  \quad M_{E_+^{-1}}(f) = E_+^{-1}f,$$ is a unitary map, and by observing the following holds for all $z \in \mathbb{C}$ and for all $f \in \mathcal{B}(\mathfrak{E})$ :
\begin{eqnarray*}
 ((M_{E_+^{-1}}T)f)(z) &=&(E_+^{-1}Tf)(z) \\
                &=& E_+^{-1}(z)(Tf)(z)\\
                &=& zE_+^{-1}(z)f(z)\\
                &=&(\tilde{T}(E_+^{-1}f))(z)\\
                &=&((\tilde{T}M_{E_+^{-1}})f)(z).
\end{eqnarray*}
Thus, \( M_{E_+^{-1}} T = \tilde{T} M_{E_+^{-1}} \), which establishes that \( E \) is unitarily equivalent to \( \tilde{T} \) on \( \mathcal{H}(S) \).
\end{proof}

\section{Appendix}
In this appendix, we outline the proof of Theorem \ref{Theorem 3.8}. For that, we require the following characterization theorem of $\mathcal{B}(\mathfrak{E})$ spaces.
\begin{thm}\label{Theorem 4.1}
Let $\mathcal{H}$ be an RKHS of $\mathfrak{X}$ valued holomorphic functions that are defined on an open set $\Omega \subseteq \mathbb{C}$ which is symmetric about the real axis with $B(\mathfrak{X})$ valued RK $K_{\omega}(z)$ on $\Omega \times \Omega$. Suppose that there exists a point $\beta \in \Omega \cap \mathbb{C}_+$ such that,
 $$K_{\beta}(z), ~ K_{\bar{\beta}}(z) ~\mbox{are Fredholm operators for all}~ z \in \Omega,$$
 and $$K_{\beta}(\beta), ~ K_{\bar{\beta}}(\bar{\beta}) ~\mbox{are invertible }.$$
 Let $\mathcal{H}_{\omega}= \{ f \in \mathcal{H}: f(\omega)=0 \}$ for each point $\omega \in \Omega$. Then the RKHS is same as a de Branges space $\mathcal{B}(\mathfrak{E})$, based on a de Branges operator $\mathfrak{E}(z)= (E_-(z), E_+(z))$ with $$K_{\omega}(z)= \frac{E_+(z)E_+(\omega)^*-E_-(z)E_-(\omega)^*}{\rho_\omega(z)}  \quad{for}~ z, \omega \in \Omega, z \neq \bar{\omega},$$
 if and only if 
\begin{itemize}
\item[(1)] $R_{\beta}\mathcal{H}_{\beta} \subseteq \mathcal{H}, R_{\bar{\beta}}\mathcal{H}_{\bar{\beta}} \subseteq \mathcal{H}$
\item[(2)] The linear transformation $$T_{\beta}= I+(\beta- \bar{\beta})R_{\beta}: \mathcal{H}_{\beta} \rightarrow \mathcal{H}_{\bar{\beta}}$$ is an isometric isomorphism.
\end{itemize}
\end{thm}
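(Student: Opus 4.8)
The plan is to establish both implications of the biconditional, reserving the vector valued Moore theorem (quoted after Lemma~\ref{Lemma 2.1}) for the harder direction, where it reduces matters to matching reproducing kernels. Two elementary facts will be used throughout: the resolvent type relation $R_\alpha - R_\gamma = (\alpha-\gamma)R_\alpha R_\gamma$ for the difference quotient operators, and the observation that for $f \in \mathcal{H}_\beta$ one has $(T_\beta f)(z) = \frac{z-\bar\beta}{z-\beta}f(z)$, so that $T_\beta f$ automatically vanishes at $\bar\beta$; hence $T_\beta$ maps $\mathcal{H}_\beta$ into $\mathcal{H}_{\bar\beta}$ as soon as condition (1) guarantees $R_\beta f \in \mathcal{H}$.

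For the necessity direction, assuming $\mathcal{H} = \mathcal{B}(\mathfrak{E})$, I would first record the structural identity of such a space: starting from the explicit kernel $K_\omega^{\mathfrak{E}}(z)$ of \eqref{Equation 2.4} and the reproducing property, a direct computation of $R_\alpha K_\omega^{\mathfrak{E}}(\cdot)u$ yields, for all $f,g \in \mathcal{B}(\mathfrak{E})$ and $\alpha,\gamma \in \Omega$,
\begin{equation*}
\langle R_\alpha f, g\rangle - \langle f, R_\gamma g\rangle - (\alpha-\bar\gamma)\langle R_\alpha f, R_\gamma g\rangle = 2\pi i\, g(\gamma)^* f(\alpha),
\end{equation*}
the $\mathcal{B}(\mathfrak{E})$ analogue of the de Branges identity of Theorem~\ref{Theorem 3.1} with $J$ replaced by $I$. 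Condition (1) then follows from the full $R_\alpha$ invariance of $\mathcal{B}(\mathfrak{E})$, which this identity encodes. For condition (2) I would specialise to $\alpha = \gamma = \beta$ and $f = g \in \mathcal{H}_\beta$: since $f(\beta)=0$ the right hand side vanishes, and expanding $\|T_\beta f\|^2 = \|f + (\beta-\bar\beta)R_\beta f\|^2$ while using $\overline{(\beta-\bar\beta)} = -(\beta-\bar\beta)$ makes the cross terms cancel the term $|\beta-\bar\beta|^2\|R_\beta f\|^2$ exactly, giving $\|T_\beta f\| = \|f\|$; surjectivity onto $\mathcal{H}_{\bar\beta}$ comes from the invertible multiplier $\frac{z-\bar\beta}{z-\beta}$ together with the symmetric argument at $\bar\beta$.

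For the sufficiency direction, assuming (1) and (2), I would construct the de Branges operator from the kernel. Using that $K_\beta(\beta)$ and $K_{\bar\beta}(\bar\beta)$ are invertible and that $K_\beta(z), K_{\bar\beta}(z)$ are Fredholm for every $z$, I would define $E_-(z)$ and $E_+(z)$ as suitably normalised sections of $z \mapsto K_\beta(z)$ and $z \mapsto K_{\bar\beta}(z)$, the operator square roots $K_\beta(\beta)^{1/2}$, $K_{\bar\beta}(\bar\beta)^{1/2}$ providing the normalisation and the conjugate symmetry of $\Omega$ the pairing between the two; the Fredholm hypothesis gives condition (1) of a de Branges operator and the invertibility at $\beta, \bar\beta$ gives condition (2). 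The heart of the argument is to promote the factorisation $\rho_\omega(z)K_\omega(z) = E_+(z)E_+(\omega)^* - E_-(z)E_-(\omega)^*$ from the base points $\omega \in \{\beta, \bar\beta\}$, where it holds by construction, to all $\omega \in \Omega$: here conditions (1) and (2) are used to propagate the relation via the resolvent identity for $R_\alpha$ and the isometry of $T_\beta$, after which holomorphy in $z$ and the identity principle close it on $\Omega \times \Omega$. Positivity of the reproducing kernel then forces $E_+(z)E_+(z)^* \succeq E_-(z)E_-(z)^*$ on $\Omega \cap \mathbb{C}_+$ (so that $E_+^{-1}E_-$ is contractive, since $SS^* \preceq I$ is equivalent to $S^*S \preceq I$), with unitarity on $\Omega \cap \mathbb{R}$ read off from the diagonal formula in \eqref{Equation 2.4}; this is condition (3) of a de Branges operator. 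Thus $\mathcal{B}(\mathfrak{E})$ is well defined with reproducing kernel $K_\omega(z)$, and Moore's theorem identifies $\mathcal{H}$ with $\mathcal{B}(\mathfrak{E})$.

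The main obstacle I anticipate is precisely this propagation step in the sufficiency direction: recovering the \emph{global} factorisation of the kernel from data at the single conjugate pair $\beta, \bar\beta$. The isometry of $T_\beta$ in condition (2) is exactly the hypothesis encoding the balance between the $E_+$ and $E_-$ blocks, and it must be combined carefully with the Fredholm normalisation so that the constructed $E_\pm$ are genuinely holomorphic and operator valued on all of $\Omega$. A secondary delicate point is the unitarity of $E_+^{-1}E_-$ on $\Omega \cap \mathbb{R}$, which is strictly stronger than norm one in infinite dimensions and requires controlling both $S^*S$ and $SS^*$; keeping track of the non self-adjoint normalising factors arising from $\rho_{\bar\beta}(\bar\beta)$ and the operator square roots $K_\beta(\beta)^{1/2}, K_{\bar\beta}(\bar\beta)^{1/2}$ is the bookkeeping on which the whole argument turns.
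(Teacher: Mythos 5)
Your necessity direction rests on a false lemma. You assert that every $\mathcal{B}(\mathfrak{E})$ space satisfies the de Branges identity of Theorem~\ref{Theorem 3.1} with $J$ replaced by $I$; by Rovnyak's characterization this would make every $\mathcal{B}(\mathfrak{E})$ space an $\mathcal{H}(U)$ space with kernel $\bigl(I-U(z)U(\omega)^*\bigr)/\rho_\omega(z)$, which is not the case. Concretely, take the vector valued Paley--Wiener space: the de Branges operator $E_{\pm}(z)=e^{\mp i\pi z}I_{\mathfrak{X}}$ satisfies all the axioms (each $E_{\pm}(z)$ is invertible, and $E_+^{-1}E_-=e^{2\pi i z}I_{\mathfrak{X}}$ is contractive on $\mathbb{C}_+$ and unitary on $\mathbb{R}$), and the kernel \eqref{Equation 2.4} is $K_\omega(z)=\frac{\sin\pi(z-\bar\omega)}{\pi(z-\bar\omega)}I_{\mathfrak{X}}$. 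With $f=g=K_0u$, $\Vert u\Vert=1$, and $\alpha=\gamma=0$, evenness of the sinc function gives $\langle R_0 f,f\rangle=\langle (R_0f)(0),u\rangle=0$ and likewise $\langle f,R_0f\rangle=0$, so the left-hand side of your identity is $0$, while the right-hand side is $2\pi i\,g(0)^*f(0)=2\pi i\neq 0$. The structural reason is that (when $E_+$ is pointwise invertible) $\mathcal{B}(\mathfrak{E})=E_+\mathcal{H}(S)$ with $S=E_+^{-1}E_-$, and multiplication by the nonconstant factor $E_+$ destroys the identity. What your isometry computation actually uses is only the special case $f=g\in\mathcal{H}_\beta$, $\alpha=\gamma=\beta$, namely $\langle R_\beta f,f\rangle-\langle f,R_\beta f\rangle=(\beta-\bar\beta)\Vert R_\beta f\Vert^2$; that special case is true (it is equivalent to symmetry of multiplication by the independent variable), but it cannot be obtained as a corollary of your general identity, and neither can $R_\alpha$ invariance, which is a separate property and is not ``encoded'' by the identity.

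In the sufficiency direction the argument is not carried out: the ``propagation step,'' which you yourself flag as the heart of the matter, is precisely the content of the theorem and is only described, never proved. Producing holomorphic Fredholm valued $E_\pm$ from $K_\beta(z)$, $K_{\bar\beta}(z)$ normalized by the inverse square roots $K_\beta(\beta)^{-1/2}$, $K_{\bar\beta}(\bar\beta)^{-1/2}$ (which exist by hypothesis) is routine; what is not routine is deriving the factorization $\rho_\omega(z)K_\omega(z)=E_+(z)E_+(\omega)^*-E_-(z)E_-(\omega)^*$ for \emph{all} $z,\omega\in\Omega$ from conditions (1) and (2) alone --- this requires computing the reproducing kernels of $\mathcal{H}_\beta$ and $\mathcal{H}_{\bar\beta}$, expressing the unitarity of $T_\beta$ as an operator identity between them, and unwinding that identity, which takes several pages in the scalar case (de Branges) and in the matrix and operator valued cases \cite{sarkar}, \cite{mahapatra2}; your remark on unitarity of $E_+^{-1}E_-$ on $\Omega\cap\mathbb{R}$ similarly presupposes pointwise invertibility of $E_\pm$ there, which must itself be established. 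Note that the paper does not redo this work either: its proof of Theorem~\ref{Theorem 4.1} consists of invoking Theorem 3.1 of \cite{mahapatra2}, proved there for entire vector valued functions, and observing that the argument adapts to functions holomorphic on an $\Omega$ symmetric about the real axis. Your outline is consistent with that known strategy, but as written it establishes neither direction: the necessity half uses a false identity, and the sufficiency half defers its only nontrivial step.
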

\begin{proof}
This theorem extends a characterization given by de Branges for spaces of scalar valued functions (see \cite{brange}) to vector valued functions based on matrix valued RK and operator valued RK that are presented in \cite{sarkar} and \cite{mahapatra2} respectively. It follows by observing that the proof of Theorem $3.1$ in \cite{mahapatra2} which is given for spaces of entire vector valued functions can be easily adapted to the spaces of vector valued functions that are holomorphic in an open set $\Omega \subseteq \mathbb{C}$ that is symmetric about the real axis.
\end{proof}

\begin{proof}[Proof of Theorem \ref{Theorem 3.8}]
In order to prove the theorem, we will use Theorem \ref{Theorem 4.1} of the characterization of de Branges spaces of vector valued functions that are analytic on $\Omega$. First observe that the space $\mathcal{N}$ consists of vector valued analytic functions defined on $\Omega$. Since $E$ is simple, the map $\psi : \mathfrak{X} \rightarrow \mathcal{N}$ given by $$\psi(f)=f_Y$$ is injective. Therefore, $\mathcal{N}$ is a vector space of analytic vector valued functions with respect to the standard scalar multiplication and pointwise addition. With respect to the inner product \eqref{Equation 3.35}, $\psi$ is a unitary map. Hence, $\mathcal{N}$ is a Hilbert space. Also, the pointwise evaluations are bounded which imply that $\mathcal{N}$ is an RKHS of $\mathfrak{X}$ valued holomorphic functions that are defined on $\Omega$. The rest of the proof is divided into steps. The step $1$ justifies that $K_{\alpha}(z)$ and $K_{\bar{\alpha}}(z)$ are Fredholm operators for all $z \in \Omega$. In step $2$, we will show that $K_{\alpha}(\alpha)$ and $K_{\bar{\alpha}}(\bar{\alpha})$ are invertible operators. Step $3$ serves to justify conditions $(1)$ and $(2)$ of Theorem \ref{Theorem 4.1}.
\\
$\textbf{1.}$ Since $\delta_z(f_Y)= f_Y(z)=G^{-1}P_Y(z)f$,
we have \begin{equation} \label{Equation 4.1}\mathrm{rng}(\delta_z)=\mathfrak{X}~ \mbox{and}~ \ker(\delta_z)=\{ f_Y : f \in \mathrm{rng}(E-zI) \}.
\end{equation} 
This implies \begin{equation} \label{Equation 4.2}\ker(\delta_z^*)=\{ 0 \}~ \mbox{and}~ \mathrm{rng}(\delta_z^*)=\{ f_Y : f \in \mathfrak{X} \ominus \mathrm{rng}(E-zI) \}.
\end{equation}
Thus, \begin{eqnarray*}\dim(\ker K_{\alpha}(z))
&=&\dim(\ker \delta_z \delta_{\alpha}^*)\\
&=& \dim(\ker \delta_{\alpha}^*)+\dim(\ker \delta_z \cap \mathrm{rng}~\delta_{\alpha}^*)\\
&=& \dim(\ker \delta_z \cap \mathrm{rng}~\delta_{\alpha}^*)\\
&=& \dim(\mathfrak{M}_z \cap \mathfrak{M}_{\alpha}^{\perp}).\end{eqnarray*}
Similarly, $$\dim(\ker K_{\alpha}(z))^*=\dim(\ker K_{z}(\alpha))=\dim(\mathfrak{M}_{\alpha} \cap \mathfrak{M}_{z}^{\perp}).$$
By \cite[Lemma $2.1$]{straus}, it is known that \begin{equation}\label{Equation 4.3}
\mathfrak{M}_{\zeta} \cap \mathfrak{M}_{\xi}^{\perp}=\{ 0 \} ~\mbox{and}~\mathfrak{M}_{\zeta} \dotplus \mathfrak{M}_{\xi}^{\perp}= \mathfrak{X} 
\end{equation} for all non real points $\zeta$, $\xi$ in the same (upper or lower) half planes. Now, if $a \in \Omega \cap \mathbb{R}$, then since $a$ is a point of regular type of $E$, the argument given in Remark \ref{Remark 3.2} implies that \begin{equation} \label{Equation 4.4}\mathfrak{M}_{\zeta} \cap \mathfrak{M}_{a}^{\perp}=\{ 0 \} ~\mbox{and}~\mathfrak{M}_{\zeta} \dotplus \mathfrak{M}_{a}^{\perp}=\mathfrak{X} ~\mbox{for all}~\zeta \in \Omega \cap(\mathbb{C}_+ \cup \mathbb{C}_-). \end{equation}
For any two closed subspaces $M$, $N$ of a Banach space, it is known that $$(M+N)^{\perp}= M^{\perp} \cap N^{\perp}.$$
By substituting $\mathfrak{M}_{\zeta}$ and $\mathfrak{M}_{a}^{\perp}$ as $M$ and $N$ respectively, we get that $\{ 0 \}= \mathfrak{X}^{\perp}= \mathfrak{M}_{\zeta}^{\perp} \cap (\mathfrak{M}_{a}^{\perp})^{\perp}$. Again, by substituting $\mathfrak{M}_{\zeta}^{\perp}$ and $\mathfrak{M}_{a}$ as $M$ and $N$ respectively, we get that $(\mathfrak{M}_{\zeta}^{\perp} +\mathfrak{M}_{a})^{\perp}= (\mathfrak{M}_{\zeta}^{\perp})^{\perp} \cap \mathfrak{M}_{a}^{\perp}=\mathfrak{M}_{\zeta} \cap \mathfrak{M}_{a}^{\perp}=\{ 0 \}$. Hence,
\begin{equation}\label{Equation 4.5}\mathfrak{M}_{a} \cap \mathfrak{M}_{\zeta}^{\perp}=\{ 0 \} ~\mbox{and}~\mathfrak{M}_{a} \dotplus \mathfrak{M}_{\zeta}^{\perp}=\mathfrak{X} ~\mbox{for all}~\zeta \in \Omega \cap(\mathbb{C}_+ \cup \mathbb{C}_-). \end{equation}
Observe that the following transformations that are obtained using \cite[Appendix $1$, Lemma $1.2$]{kreinlect} and generalized Caley transforms (see \cite[Chapter $1$, Section $2$]{kreinlect}), 
$$I+(\zeta-\xi)(E'-\zeta)^{-1}: \mathfrak{M}_{\zeta} \cap \mathfrak{M}_{\bar{\xi}}^{\perp} \rightarrow \mathfrak{M}_{\xi} \cap \mathfrak{M}_{\bar{\zeta}}^{\perp}$$ and 
$$I+(\bar{\zeta}-\xi)(E'-\bar{\zeta})^{-1}: \mathfrak{M}_{\bar{\zeta}} \cap \mathfrak{M}_{\bar{\xi}}^{\perp} \rightarrow \mathfrak{M}_{\xi} \cap \mathfrak{M}_{\zeta}^{\perp},$$ 
where $E'$ is a self adjoint extension of $E$ within $\mathfrak{X}$, are bijective for all $\xi \in \Omega \cap (\mathbb{C}_+ \cup \mathbb{C_-})$. Using the above observation and conditions $(1)$, $(2)$ of the hypothesis, it is justified that \begin{equation} \label{Equation 4.6}\dim(\mathfrak{M}_{\omega} \cap \mathfrak{M}_{\bar{\alpha}}^{\perp})< \infty ~\mbox{for all}~\omega \in \Omega \cap \mathbb{C}_+ \end{equation} and \begin{equation} \label{a.0.6}\dim(\mathfrak{M}_{\omega} \cap \mathfrak{M}_{\alpha}^{\perp})< \infty ~\mbox{for all}~\omega \in \Omega \cap \mathbb{C}_-.\end{equation}
Thus, by conditions $(1)$, $(2)$ of the hypothesis along with the observations in \eqref{Equation 4.3}, \eqref{Equation 4.4}, \eqref{Equation 4.5}, \eqref{Equation 4.6}, we get that $\dim(\ker K_{\alpha}(z))$ and $\dim(\ker K_{\alpha}(z)^*)$ are finite dimensional for all $z \in \Omega $. Now, in order to show that $K_{\alpha}(z)$ is Fredholm for all $z \in \Omega$, it is left to show that $\mathrm{rng}~K_{\alpha}(z)$ is closed for all $z \in \Omega$, or equivalently, $\mathrm{rng}~K_{z}(\alpha)$ is closed for all $z \in \Omega$. By \cite[Corollary $2.5$]{prod}, $$\mathrm{rng}~K_{z}(\alpha) ~\mbox{ is closed if and only if}~ \ker\delta_z+ \mathrm{rng}~\delta_{\alpha}^*~\mbox{ is closed}.$$ 
Since $\psi$ is a unitary map and using the closedness condition in $(1)$ of hypothesis, together with \eqref{Equation 4.1}, \eqref{Equation 4.2}, \eqref{Equation 4.3}, \eqref{Equation 4.5}, we get that $\mathrm{rng}~K_{\alpha}(z)$ is closed. Similarly, it can be shown that $K_{\bar{\alpha}}(z)$ is Fredholm for all $z \in \Omega $.\\
$\textbf{2.}$ Since $$\dim(\ker K_{\alpha}(\alpha))= \dim(\mathfrak{M}_{\alpha} \cap \mathfrak{M}_{\alpha}^{\perp})=\{  0 \}$$ and $$\dim(\ker K_{\alpha}(\alpha)^*)= \dim(\ker K_{\alpha}(\alpha))=\{  0 \} ,$$
it is clear that $ K_{\alpha}(\alpha)$ is a Fredholm operator with index zero. Hence, by part $(2)$ of Theorem \ref{Theorem 2.5}, we get that $ K_{\alpha}(\alpha)$ is invertible. Similarly, it can be shown that $ K_{\bar{\alpha}}(\bar{\alpha})$ is also invertible.\\
$\textbf{3.}$ It can be easily seen that the operator $E$ on $\mathfrak{X}$ is symmetric and it is unitarily equivalent to the multiplication operator by an independent variable on $\mathcal{N}$. For more clarification, we refer to \cite[Section $10$]{mahapatra}. Now, using \cite[Lemma $7.4$]{mahapatra}, we get that the linear transformation $T_{\alpha}$ is an isometric isomorphism. Also, $\mathcal{N}$ is invariant under $R_{\xi}$ for all $\xi \in \Omega$, explanation of which follows similarly as in Theorem \ref{Theorem 3.3}.
\end{proof}

\noindent

\textbf{Acknowledgements:} The authors gratefully acknowledge the anonymous referee for providing several valuable improvements and suggestions on the earlier version of the manuscript, which have significantly contributed to the development of the revised version.

This work is partially supported by the FIST program of the Department of Science and Technology, Government of India, Reference No. SR/FST/MS-I/2018/22(C). The research of the second author is supported by the MATRICS grant of SERB
(MTR/2023/001324).\\

\textbf{Data Availability:} No data was used for the research described in this article.\\ 

\textbf{Conflict of interest:} The authors declare that they have no conflict of interest.


\begin{thebibliography}{25}	
\bibitem{Akhiezer} N. I. Akhiezer, I. M. Glazman,\emph{ Theory of Linear Operators in Hilbert Space, Two volumes bound as one.} Dover Publications, New York, 1993.

\bibitem{alpay} D. Alpay, H. Dym, \emph{Hilbert spaces of analytic functions, inverse scattering, and operator models. II.} Integral equations and operator theory 8 (1985) 145-180.	
	
 \bibitem{arlinski}Y.~M. Arlinski\u i, \emph{Families of symmetric operators with trivial domains of their squares}, Complex Anal. Oper. Theory 17 (2023) no.~7, Paper No. 120, 34 pp.	
	
\bibitem{Aronszajn 1950}  N. Aronszajn, \emph{Theory of Reproducing Kernels,} Trans. Amer. Math. Soc. Vol. 68 (1950) 337-404.

\bibitem{ArD08} D. Z. Arov, H. Dym, \emph{$J-$ Contractive Matrix Valued Functions and Related Topics,} Cambridge University Press, Cambridge, England, 2008.

\bibitem{multi} D. Z. Arov, H. Dym, \emph{Multivariate prediction, de Branges spaces, and related extension and inverse problems}, Birkh$\ddot{a}$user, Basel, 2018.

\bibitem{ArDymone} D. Z. Arov, H.  Dym, \emph{ Two classes of de {B}ranges spaces that are really one}, Pure Appl. Funct. Anal. 7(1) (2022) 1-26.

\bibitem{brange} L. de Branges, \emph{Some Hilbert spaces of entire functions}, Proc. Amer. Math. Soc. 10 (1959) 840-846.

\bibitem{Branges 1} L. de Branges, \emph{Some Hilbert spaces of analytic functions, I,} Trans. Amer. Math. Soc. 106 (1963) 445-468.
	
\bibitem{Branges 2} L. de Branges, \emph{Some Hilbert spaces of analytic functions, II,} J. Math. Anal. Appl. 11 (1965) 44-72.

\bibitem{Branges 3} L. de Branges, \emph{Some Hilbert spaces of analytic functions, III,} J. Math. Anal. Appl. 12 (1965) 149-186.

\bibitem{Brange}L. de Branges, \emph{ Hilbert spaces of entire functions}, Prentice-Hall, Inc., Englewood Cliffs, N.J., 1968.

\bibitem{dd}V. Derkach, H. Dym,\emph{ Functional models for entire symmetric operators in rigged de Branges Pontryagin spaces}, J. Funct. Anal. 280, no. 2 (2021) 108776.

\bibitem{DymJFA} H. Dym, \emph{Two classes of vector valued de {B}ranges spaces,} J. Funct. Anal. (2023) Paper No. 109758, 31.

\bibitem{sarkar}H. Dym, S. Sarkar, \emph{Multiplication operators with deficiency indices $(p, p)$ and sampling formulas in reproducing kernel Hilbert spaces of entire vector valued functions.} J. Funct. Anal. 273, no. 12 (2017) 3671-3718.

\bibitem{Ginzburg1}Yu. P. Ginzburg, \emph{On $J$-contractive operator functions}, Dokl. Akad. Nauk SSSR, 117 (1957) 171-173.

\bibitem{kreinlect}M. L. Gorbachuk, V. I. Gorbachuk, \emph{M. G. Krein’s Lectures on Entire Operators}, Birkh$\ddot{a}$user, Boston, 1997.

\bibitem{prod}S. Izumino, \emph{The product of operators with closed range and an extension of the reverse order law}, Tohoku Math. J. (2), 34 (1982), no.~1, 43-52.
\bibitem{kato}T. Kato, \emph{Perturbation theory for linear operators}, reprint of the 1980 edition, 
Classics in Mathematics, Springer, Berlin, 1995.

\bibitem{fundamentalkrein}M. G. Krein, \emph{Fundamental aspects of the representation theory of Hermitian operators with deficiency index $(m, m)$}, Ukr.
Mat. 3 (66) (1949), English transl. Am. Math. Soc. Transl. Ser. 2 97 (1971) 75-143.
	
\bibitem{mahapatra} S. Mahapatra, S. Sarkar, \emph{Vector valued de Branges spaces of entire functions based on pairs of Fredholm operator valued functions and functional model}, J. Math. Anal. Appl. 533 (2024).
	 
\bibitem{mahapatra2} S. Mahapatra, S. Sarkar, \emph{Some aspects of vector valued de Branges spaces of entire functions}, arXiv:2305.03008 [math.FA]	 
	
\bibitem{mahapatra3} S. Mahapatra, S. Sarkar, \emph{Analytic Kramer sampling and quasi Lagrange-type interpolation in vector valued RKHS}, Results Math, 79, no. 6 (2024) 230.

\bibitem{Pontryagin1}L.S. Pontryagin, \emph{Hermitian operators in spaces with indefinite metric}, Izvestia Akad. Nauk SSSR Ser. Math. 8 (1944) 243-280.

\bibitem{Paulsen} V. Paulsen, M. Raghupathi,  \emph{An Introduction to the Theory of Reproducing Kernel Hilbert Spaces}, Cambridge Studies in Advanced Mathematics, Cambridge University Press, Cambridge, 2016.

\bibitem{Potapov1}V. P. Potapov, \emph{The multiplicative structure of $J$-contractive matrix functions}, Trudy Moskov. mat. Obsc., 4 (1955) 125-236.

\bibitem{Rovnyak}J. Rovnyak, \emph{ Characterization of spaces K (M)}, (1968) Unpublished paper.

\bibitem{Sobolev1}S. L. Sobolev, \emph{The motion of a symmetric top containing a cavity filled with a liquid}, Z. Prikl. Meh. i Tehn. Fiz., 3 (1966) 20-55 .

\bibitem{straus}A. V. Shtraus, \emph{On extensions and characteristic function of symmetric operator}, Izvestia Akad. Nauk SSSR Ser. Math. 
32 (1) (1968) 186-207, English translation Math. USSR, Izv. 2 (1968) 181-203 (Russian).

\end{thebibliography}
\end{document}